\newtheorem{thm}{Theorem}[section]
\newtheorem{cor}[thm]{Corollary}
\newtheorem{lem}[thm]{Lemma}
\newtheorem{prop}[thm]{Proposition}
\newtheorem{defn}[thm]{Definition}
\newtheorem{exam}[thm]{Example}
\newtheorem{rem}[thm]{Remark}
\numberwithin{equation}{section}
\begin{document}

\bibliographystyle{amsplain}

\author{Amir Mousivand}

\address{Amir Mousivand\\Department of Mathematics, Firoozkooh Branch, Islamic
Azad University (IAU), Firoozkooh, Iran.}\email{amirmousivand@gmail.com}

%\thanks{}

\keywords{Circulant graph, Serre's condition, $S_2$ graph, well covered graph, Cohen-Macaulay graph, Buchsbaum graph, pure shellable simplicial complex}

\subjclass[2010]{13H10, 05C75}

\title{Circulant $S_2$ graphs}

\begin{abstract}
Recently, Earl, Vander Meulen, and Van Tuyl characterized some families of Cohen-Macaulay or Buchsbaum circulant graphs discovered by Boros-Gurvich-Milani$\check{\text{c}}$, Brown-Hoshino, and Moussi. In this paper, we will characterize those families of circulant graphs which satisfy Serre's condition $S_2$. More precisely, we show that for some families of circulant graphs, $S_2$ property is equivalent to well-coveredness or Buchsbaumness, and for some other families it is equivalent to Cohen-Macaulayness.  We also give examples of infinite families of circulant graphs which are Buchsbaum but not $S_2$, and vice versa.
\end{abstract}

\maketitle

\section{Introduction}

Let $G$ be a finite simple undirected graph with the vertex set $V(G)$ and the edge set $E(G)$. A subset $C$ of $V(G)$ is called a {\it vertex cover} of $G$
if $C  \cap e \ne \emptyset $ for  any  $e \in E(G)$. A vertex cover $C$ of $G$ is called \textit{minimal} if there is no proper subset of $C$
which is a vertex cover. A graph $G$ is said to be {\it well-covered} if all its minimal vertex covers have the same cardinality.

A simplicial complex $\Delta$ on the vertex set  $V=\{x_1 , \ldots, x_n \}$ is a collection of subsets of $V$, with the properties:
(1) $\{x_i\}\in\Delta$ for all $i$, and (2) if $F\in\Delta$, then all subsets of $F$ are also in $\Delta$ (including the empty set).
An element $F$ of $\Delta$ is called a {\it face} of $\Delta$, and maximal faces of $\Delta$ (with respect to inclusion) are called {\it facets} of $\Delta$. We denote the simplicial complex $\Delta$ with facets $F_1 , \ldots , F_t$ by $\Delta = \langle F_1 , \ldots , F_t \rangle$.
The {\it dimension} of a face $F\in\Delta$ is defined by $\dim F= | F | -1$, and The dimension of $\Delta $ is defined by $\dim \Delta =  \max \{ \dim F ~:~ F \in \Delta \}$. A simplicial complex is called {\it pure} if all its facets have the same cardinality. $\Delta$ is called {\it Cohen-Macaulay} (resp. {\it Buchsbaum}) over a field $k$ if its Stanley-Reisner ring $k[\Delta]$ is Cohen-Macaulay (resp. Buchsbaum), and called Cohen-Macaulay (resp. Buchsbaum) if it has the same property over any field $k$.
For a face $F$ of $\Delta$, the {\it link} of $F$ is the simplicial complex
$$\textnormal{link}_\Delta(F)=\{G\in\Delta ~ ~:~ ~ G\cap F=\emptyset ~ \text{and} ~ G\cup F\in \Delta\}.$$
By Reisner's criterion (see e.g. \cite [Theorem 5.3.5]{V}), A simplicial complex $\Delta$ is Cohen-Macaulay over a field $k$, if and only if $\tilde{H}_i(\textnormal{link}_\Delta(F);k)=0$ for all $F\in \Delta$ and $i<\rm{dim}\rm{link}_\Delta(F)$.

The {\it independence complex} of a graph $G$, denoted by $\text{Ind}(G)$, is the simplicial complex whose faces correspond to independent (or stable) sets of $G$, where a subset $F$ of $V(G)$ is called an independent set if any subsets of $F$ with cardinality two do not belong to $E(G)$. Since the complement of a vertex cover is an independent set, it follows that a graph $G$ is well-covered if and only if $\text{Ind}(G)$ is a pure simplicial complex. A graph $G$ is called Cohen-Macaulay (resp. Buchsbaum) if the independence complex $\text{Ind}(G)$ is Cohen-Macaulay (resp. Buchsbaum).

Given an integer $n\geq 1$ and a generating set $S\subseteq \{1,2\ldots,\lfloor\frac{n}{2}\rfloor\}$, the {\it circulant graph} $C_n(S)$ is the graph with the vertex set $V=\{0,1,\ldots,n-1\}$ whose edge set is $$E=\{\{i,j\} ~ ~ : ~ ~ |i-j|\in S ~ ~ \text{or} ~ ~ n-|i-j|\in S\}.$$
For $S=\{a_1,\ldots,a_t\}$, we abuse the notation and use $C_n(a_1,\ldots,a_t)$ to denote $C_n(S)$. Circulant graphs belong to the family of cayley graphs and may be considered as a generalization of cycles because $C_n=C_n(1)$.
In recent years, there have been a flurry of work identifying circulant graphs which are also well-covered (see e.g. \cite{BGM,BH1,BH2,H,M}).
Since a well-covered graph has the property that its independence complex is pure, and a pure complex can have some extra combinatorial (e.g. vertex decomposable and shellable) or topological (e.g. Cohen-Macaulay and Buchsbaum) structure, i.e., for a pure simplicial complex the following hierarchy  is known:
$$\mbox {vertex~decomposability}
\Longrightarrow
\mbox {shellability}
\Longrightarrow
\mbox {Cohen-Macaulayness}
\Longrightarrow
\mbox {Buchsbaum},
$$
so it is natural to ask what more structures $\text{Ind}(C_n(S))$ entertains?

Recently, Vander Meulen, Van Tuyl, and Watt \cite{VVW} characterized Cohen-Macaulay (vertex decomposable, shellable, or buchsbaum)  circulant graphs of the form $C_n(1,2,\ldots,d)$ and Cohen-Macaulay cubic circulant graphs. Earl, Vander Meulen, and Van Tuyl \cite{EVV} determined when circulant graphs of the form $C_n(d + 1, d + 2,\ldots ,\lfloor\frac{n}{2}\rfloor)$, $C_n(1,\ldots,\hat{i},\ldots,\lfloor\frac{n}{2}\rfloor)$, and one-paired circulants have these structures. Also Vander Meulen and Van Tuyl \cite{VV} investigated when the independence complex of the lexicographical product of two graphs is either vertex decomposable or shellable. They also constructed an infinite family of graphs in which the independence complex of each graph is shellable, but not vertex decomposable.

A finitely generated graded module $M$ over a Noetherian graded ring $R$ is said to satisfy the Serre's condition $S_r$ (or simply say $M$ is an $S_r$ module) if
$$\text{depth}(M_p)\geq \text{min}\{r, \text{dim}(M_p)\},$$ for all $p\in \text{Spec}(R)$. Since $M$ is Cohen-Macaulay if $\text{depth}(M_p)=\text{dim}(M_p)$ for every $p\in \text{Spec}(R)$, it follows that $M$ is Cohen-Macaulay if and only if it satisfies the Serre's condition $S_r$ for all $r\geq 1$. A simplicial complex $\Delta$ is said to satisfy Serre's condition $S_r$ over a field $k$ (or simply say $\Delta$ is an $S_r$ complex) if the Stanley-Reisner ring $k[\Delta]$ satisfies Serre's condition $S_r$. Terai \cite{T} presented the following analogue of Reisner's criterion for $S_r$ simplicial complexes.\\\\
{\bf Theorem.}
{\it A simplicial complex $\Delta$ satisfies Serre's condition $S_r$ over a field $k$ if and only if for every face $F\in\Delta$ (including the empty face), $\tilde{H}_i(\textnormal{link}_\Delta(F);k)=0$ for all $i<\rm{min} \{{\it r}-1,\rm{dimlink}_\Delta(F)\}$.}\\

There are some basic facts related to $S_r$ simplicial complex. Every simplicial complex satisfies $S_1$. On the
other hand, for $r\geq 2$, simplicial complexes satisfying $S_r$ (over a field k), are pure (\cite[Lemma 2.6]{MT}) and strongly connected (\cite [Corollary 2.4]{Ha}). Recall that a pure simplicial complex $\Delta$ is called strongly connected if, for every pair of facets $F$ and $F'$ of $\Delta$, there
exists a sequence of facets $F=F_0,F_1,\ldots,F_t =F'$ such that $|F_k\cap F_{k+1}|=|F_k|-1$ for $k=0,1,\ldots,t-1$.
We refer the reader to \cite{GPSY,HTYZ,MT,PSTY,T,TY} for more details on the properties of $S_r$ (and sequentially $S_r$) simplicial complexes.
Also as an immediate consequence of previous Theorem, we have the following corollary on $S_2$ simplicial complexes.\\\\
{\bf Corollary.}
{\it A simplicial complex $\Delta$ satisfies Serre's condition $S_2$ over a field $k$ if and only if $\textnormal{link}_\Delta(F)$ is connected for every face $F\in\Delta$ with $\rm{dim}\rm{link}_\Delta(F)\geq 1$. In particular, $S_2$ property of a simplicial complex does not depend on the characteristic of the field $k$.}\\

Although Cohen-Macaulay (or Buchsbaum) property of stanley-Reisner rings depend on the base field and hence it is a topological property, $S_2$ property does not depend on the base field and $S_2$ Stanley-Reisner rings can be characterized combinatorially.

We say a graph $G$ satisfies the Serre's condition $S_n$, or simply is an $S_n$ graph, if its independence complex $\text{Ind}(G)$ satisfies this condition. As an immediate consequence, it follows that $S_n$ graphs are well-cowered. Haghighi, Yassemi, and Zaare-Nahandi \cite{HYZ} showed that $S_2$ property for bipartite graphs and chordal graphs is equivalent to Cohen-Macaulayness. Recall that a graph $G$ is bipartite if there exists a partition $V(G)=V \bigcup {V'}$ with $V \bigcap {V'} = \emptyset$ such that each edge of $G$ is of the form $\{i,j\}$ with $i \in V$ and $j \in {V'}$, and $G$ is called chordal if every cycle of length at least four has a chord, where a chord is an edge joining two nonadjacent vertices of the cycle.

In this paper we characterize some families of circulant $S_2$ graphs. In section 2, we consider $S_2$ property of powers of cycles. More precisely, we show that if $n\geq 2d\geq 2$, then $C_n(1,2,\ldots,d)$ is $S_2$ if and only if $n\leq 3d+2$ and $n\neq 2d+2$, or $n=4d+3$. Comparing this with \cite[Theorem 3.7]{VVW} implies that $C_n(1,2,\ldots,d)$ is Buchsbaum but not $S_2$ if and only if $n=2d+2$. In section 3, we consider circulants of the form $C_n(d + 1, d + 2,\ldots ,\lfloor\frac{n}{2}\rfloor)$. We show that such a graph satisfies serre's condition $S_2$ if and only if it is well-covered, i.e., $n>3d$ or $n= 2d + 2$. This is equivalent to say that $C_n(d + 1, d + 2,\ldots ,\lfloor\frac{n}{2}\rfloor)$ is Buchsbaum. In this case, we will show that the only non-shellable (connected) link in $\Delta=\text{Ind}(C_n(d + 1, d + 2,\ldots ,\lfloor\frac{n}{2}\rfloor))$ is the link of $\emptyset$ which is $\Delta$ (except for $d=1$), and the link of all non-empty faces of $\Delta$ are shellable. In section 4, we investigate circulants of the form $C_n(1,\ldots,\hat{i},\ldots,\lfloor\frac{n}{2}\rfloor)$. We show that cirulant $S_2$ graphs in this family are all Cohen-Macaulay. Using \cite[Theorem 4.2]{EVV} it is equivalent to say that $\text{gcd}(i,n)=1$. Since circulants in this family are all Buchsbaum (see \cite[Theorem 4.2]{EVV}), it follows that circulant graphs of the form $C_n(1,\ldots,\hat{i},\ldots,\lfloor\frac{n}{2}\rfloor)$ for which $\text{gcd}(i,n)> 1$, is an infinite family of Buchsbaum graphs that are not $S_2$. Section 5 is dedicated to one-paired circulant graph, where we show that one-paired circulant graph $C(n;a,b)$ is $S_2$ if and only if $n=ab$, i.e., it is Cohen-Macaulay. Combining this together with \cite[Corollary 5.10]{EVV} implies that, if $m>1$, then $C(mb;1,b)$ is Buchsbaum but not $S_2$. Finally, in section 6 we identify which circulant cubic graphs are $S_2$. More precisely, we first show that the only non-$S_2$ connected circulant cubic graph is $C_6(1,3)$. Combining this together with a result of Davis and Domke \cite{DD} enables us to prove that the circulant cubic graph $C_{2n}(a,n)$ is $S_2$ if and only if $\frac{2n}{t}=3,4,5,8$, where $t={\rm gcd}(a,2n)$. As the final result of this paper, we present infinite families of circulant graphs which are $S_2$ but not Buchsbaum, namely, circulants of the form $C_{8t}(t,4t)$, $C_{10t}(2t,5t)$, and $C_{10t}(4t,5t)$, where $t>1$.

\section{Circulants of the form $C_n(1,2,\ldots,d)$}

In this section we identify which circulants of the form $C_n(1,2,\ldots,d)$ satisfy serre's condition $S_2$. We need the following result of Brown and Hoshino.

\begin{thm} \label{THM2}
{\rm (\cite[Theorem 4.1]{BH2})}  Let $n$ and $d$ be integers with $n\geq 2d\geq 2$. Then $C_n(1,2,\ldots,d)$ is well-covered if and only if $n\leq 3d+2$ or $n=4d+3$.
\end{thm}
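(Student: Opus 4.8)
Since Theorem~\ref{THM2} is quoted from \cite{BH2}, I will only indicate the route I would take. The plan is to translate well-coveredness of $G=C_n(1,2,\ldots,d)$ into an elementary counting problem about cyclic compositions of $n$, using that the vertices of $G$ sit on $\mathbb{Z}_n$ with $u\sim v$ exactly when the circular distance between $u$ and $v$ is at most $d$.

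First I would describe the maximal independent sets of $G$. If $I=\{v_1<v_2<\cdots<v_k\}$ and we set $g_i:=v_{i+1}-v_i$ (indices mod $k$, with $g_k:=n-v_k+v_1$), then $I$ is independent iff every circular gap satisfies $g_i\ge d+1$. For maximality one checks, gap by gap, the vertex sitting $j$ steps past $v_i$: it is at circular distance $j$ from $v_i$ and $g_i-j$ from $v_{i+1}$ (both being the relevant short arcs once $k\ge2$ and $n\ge2d$), and one of these is $\le d$ for every $1\le j\le g_i-1$ precisely when $g_i\le 2d+1$; the case $k=1$ degenerates to the requirement $n\le2d+1$. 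Thus the cardinalities of maximal independent sets of $G$ are exactly the integers $k$ for which $n$ can be written as an ordered sum of $k$ terms, each lying in $\{d+1,\ldots,2d+1\}$.

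Next I would observe that such a composition of $n$ exists iff $k(d+1)\le n\le k(2d+1)$: these bounds are necessary, and conversely one starts from the all-$(d+1)$ composition and raises entries one unit at a time, passing through every intermediate total. Hence the set of cardinalities of maximal independent sets is the integer interval $\mathcal{K}_n=\{k\ge1:\ k(d+1)\le n\le k(2d+1)\}$, and $G$ is well-covered iff $|\mathcal{K}_n|=1$. Now $k$ and $k+1$ both lie in $\mathcal{K}_n$ iff $n\in J_k:=[(k+1)(d+1),\,k(2d+1)]$, and $J_k$ is nonempty exactly for $k\ge2$; one computes $J_2=[3d+3,\,4d+2]$, while for $k\ge3$ the intervals $J_k$ and $J_{k+1}$ leave no integer uncovered between them, so $\bigcup_{k\ge3}J_k=[4d+4,\infty)$. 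Therefore $\bigcup_{k\ge2}J_k$ consists of all integers $\ge3d+3$ other than $4d+3$; since $\mathcal{K}_n$ is an interval, $|\mathcal{K}_n|\ge2$ iff $n$ lies in this union, which (using $n\ge2d$) yields exactly: $G$ is well-covered iff $n\le3d+2$ or $n=4d+3$.

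The conceptual heart is the first step — isolating the single two-sided inequality $d+1\le g_i\le 2d+1$ that encodes independence and maximality simultaneously; after that everything is bookkeeping. The one genuinely arithmetic point is the final computation pinning down $4d+3$ as the unique large exception, and there I would be most careful about small values of $d$ and about the boundary values $n\in\{2d,2d+1,2d+2,3d+2,4d+2,4d+4\}$.
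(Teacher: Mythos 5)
This theorem is imported verbatim from Brown and Hoshino (\cite[Theorem 4.1]{BH2}); the paper offers no proof of its own, so there is nothing internal to compare your argument against. That said, your sketch is a correct and self-contained route to the result. The key reduction is sound: for $k\ge 2$, a set with cyclic gaps $g_1,\dots,g_k$ is independent iff all $g_i\ge d+1$ (both arcs between any two members are then sums of gaps, hence $\ge d+1$), and maximal iff moreover all $g_i\le 2d+1$; the $k=1$ case correctly degenerates to $n\le 2d+1$, which is consistent with the uniform description $k(d+1)\le n\le k(2d+1)$. One point you pass over silently, though it is routine: when testing whether the vertex $v_i+j$ can be adjoined, you compare it only against $v_i$ and $v_{i+1}$; one should note that its distance to every other member of $I$ is automatically $\ge d+1$ because each of the two arcs to such a member contains at least one full gap. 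The interval bookkeeping at the end is right: $\mathcal{K}_n$ is the set of integers in $[n/(2d+1),\,n/(d+1)]$, hence an integer interval, failure of well-coveredness is membership in some $J_k=[(k+1)(d+1),k(2d+1)]$, $J_2=[3d+3,4d+2]$, consecutive $J_k$ for $k\ge 3$ leave no gaps (this needs $k\ge 2+1/d$, i.e.\ $k\ge3$), and $4d+3$ is the unique integer missed. So the proposal stands as a legitimate elementary proof of the cited result, essentially the gap-sequence argument one would expect to find in \cite{BH2} itself.
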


Using the above result we get the next characterization of circulant $S_2$ graphs of the form $C_n(1,2,\ldots,d)$.

\begin{thm} \label{THM3} Let $n$ and $d$ be integers with $n\geq 2d\geq 2$. Then $G=C_n(1,2,\ldots,d)$ is $S_2$ if and only if $n\leq 3d+2$ and $n\neq 2d+2$, or $n=4d+3$.
\end{thm}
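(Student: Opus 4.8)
The plan is to leverage Theorem~\ref{THM2}: since $S_2$ implies well-covered, we may restrict attention to the range $n\le 3d+2$ or $n=4d+3$, and then decide which of these cases actually satisfy $S_2$ by applying the Corollary above — that is, by checking whether $\mathrm{link}_\Delta(F)$ is connected for every face $F$ of $\Delta=\mathrm{Ind}(C_n(1,2,\ldots,d))$ with $\dim\mathrm{link}_\Delta(F)\ge 1$. The first reduction I would make is to understand the structure of these independence complexes concretely. In $C_n(1,2,\ldots,d)$ two vertices are adjacent iff their cyclic distance is at most $d$, so an independent set is exactly a set of vertices on the cycle $\mathbb{Z}_n$ that are pairwise at cyclic distance $>d$; equivalently, the ``gaps'' between consecutive chosen vertices (going around) are all $\ge d+1$. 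This gives a clean combinatorial description of the facets (maximal such sets) and, more importantly, of links: $\mathrm{link}_\Delta(F)$ for an independent set $F=\{v_1,\ldots,v_k\}$ is the join of the independence complexes of the paths obtained by deleting from $\mathbb{Z}_n$ each chosen vertex together with its $d$ neighbors on each side. So links of nonempty faces decompose as joins of complexes of the form $\mathrm{Ind}(P_m(1,\ldots,d))$ for various path lengths $m$, and a join is connected as soon as one factor is nonempty with a positive-dimensional face, or at least two factors are nonempty — which reduces the whole problem to a careful but elementary bookkeeping of which gap-lengths can occur and whether the resulting path-complexes are disconnected (the only obstruction being a path complex that is a single point or an empty complex in an otherwise-forced configuration).

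Concretely I would organize the argument as follows. First, handle the link of the empty face: $\mathrm{link}_\Delta(\emptyset)=\Delta$ itself, so I must determine exactly when $\Delta$ is connected, i.e. when $C_n(1,\ldots,d)$'s independence complex is connected; this is where the excluded value $n=2d+2$ should emerge, since there $\Delta$ splits into two disjoint simplices (the vertices naturally pair up into two ``antipodal'' classes and no larger independent set exists), and I expect $n=4d+3$ to survive because although $\Delta$ has an unusual structure it is still connected. Second, for nonempty faces, use the join decomposition: if $F$ is nonempty then $\mathrm{link}_\Delta(F)$ is a join of path-independence-complexes $\mathrm{Ind}(P_{m_j}(1,\ldots,d))$ over the gaps; such a join fails to be connected only in degenerate cases, and I would enumerate precisely when a single-vertex face $F=\{v\}$ gives a link that is a disjoint union — this again points to a $2d+2$-type obstruction on the remaining path. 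Third, assemble: in the range $n\le 3d+1$ (so few vertices that independent sets have size $\le 2$, or $\le 3$) the bookkeeping is small; the boundary cases $n=3d+2$, $n=4d+3$, and $n=2d+2$ each need individual attention.

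For the converse direction — showing $S_2$ actually holds when $n\le 3d+2$, $n\ne 2d+2$, or $n=4d+3$ — I would either verify connectivity of all relevant links directly via the join description, or, more efficiently, invoke the known classification from \cite{VVW} (Theorem~3.7 there) identifying these same graphs as Buchsbaum/Cohen-Macaulay in the appropriate subranges, together with the implication ``Cohen-Macaulay $\Rightarrow S_2$'': for $n\le 3d+1$ the complex should already be Cohen-Macaulay (indeed shellable), so $S_2$ is automatic, leaving only the genuinely new cases $n=3d+2$ and $n=4d+3$ to check by hand. The main obstacle I anticipate is the case analysis at the two extreme well-covered values $n=3d+2$ and $n=4d+3$: there the independence complex is not Cohen-Macaulay (it is merely Buchsbaum, or close to it), the facets have a somewhat rigid combinatorial form, and one must verify connectivity of $\mathrm{link}_\Delta(F)$ for \emph{every} face — including $F=\emptyset$ — without the luxury of a Reisner-type shortcut; getting the path-length bookkeeping exactly right near these boundaries, and cleanly isolating why $2d+2$ is the unique bad value strictly below $3d+2$, is the crux of the proof.
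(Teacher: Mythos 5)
Your proposal is correct and follows the same overall skeleton as the paper's proof: necessity via well-coveredness (Theorem~\ref{THM2}) plus the disconnectedness of $\mathrm{Ind}(C_{2d+2}(1,\ldots,d))$, and sufficiency via Terai's connectivity criterion applied to $\Delta$ and to links of vertices. Where you differ is in how the links are analyzed: the paper computes $\dim\mathrm{Ind}(G)=\lfloor n/(d+1)\rfloor-1$, observes that the cases $n\le 3d+2$, $n\ne 2d+2$ are one- or zero-dimensional and connected (hence Cohen--Macaulay), and for $n=4d+3$ writes down an explicit Hamiltonian-type cycle of edges in $\mathrm{Ind}(G)$ and a complete list of facets containing $0$ to verify connectivity of $\mathrm{link}(0)$; you instead propose a structural decomposition of $\mathrm{link}_\Delta(F)$ as a join of independence complexes of ``gap'' paths $P_m(1,\ldots,d)$, which is valid (surviving vertices in distinct arcs are at cyclic distance at least $2d+2$ in both directions, hence nonadjacent) and reduces the vertex-link check at $n=4d+3$ to the connectivity of $\mathrm{Ind}(P_{2d+2}(1,\ldots,d))$ --- arguably a cleaner bookkeeping device than the paper's facet list, and your fallback of citing \cite{VVW} for the Cohen--Macaulay subrange is legitimate (note that this already covers $n=3d+2$, so only $n=4d+3$ genuinely needs a hand check). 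One small factual slip: at $n=2d+2$ the independence complex is not ``two disjoint simplices'' coming from two antipodal classes but rather $d+1$ disjoint edges $\{i,i+d+1\}$; this does not affect your argument, since all that matters is that the complex is positive-dimensional and disconnected.
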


\begin{proof} If $G$ is $S_2$, then $G$ is well-covered and hence by Theorem \ref{THM2} we get $n\leq 3d+2$ or $n=4d+3$. In the case where $n=2d+2$, $\text{Ind}(G)$ comprises of $d+1$ disjoint $1$-faces (edges) and hence $G$ is not $S_2$. Now we prove the converse. The idea is inspired by the proof of \cite[Theorems 3.4 and 3.5]{VVW}. By \cite[Theorem 3.1]{BH2} one has $\text{dimInd}(G)=\lfloor \frac{n}{d+1}\rfloor-1$. If $n=2d$ or $n=2d+1$, then $\text{dimInd}(G)=0$ and $G$ is Cohen-Macaulay. If $2d+3\leq n\leq 3d+2$, then $\text{dimInd}(G)=1$. In this case for $0\leq i<j\leq n-1$, there exists the path $i,i+d+2,i+1,i+d+3,i+2,\ldots,j$ of $1$-faces (facets or edges) of $\text{Ind}(G)$. In particular, $\text{Ind}(G)$ is connected, and again it is Cohen-Macaulay. Now assume $n=4d+3$. One has $\text{dimInd}(G)=2$. First note that
$$\textit{\bf{0}},d+1,2d+2,3d+3,\textit{\bf{1}},d+2,2d+3,3d+4,\textit{\bf{2}},\ldots,\textbf{\textit{d}-1},2d,3d+1,4d+2,\textbf{\textit{d}},2d+1,3d+2,\textit{\bf{0}}$$
is a path (cycle) of $1$-faces of $\text{Ind}(G)$, i.e., $\text{Ind}(G)$ is connected. To complete the proof, by symmetry, it suffices to show that $\rm{link}_{\rm{Ind}(G)}(0)$ is connected. One can directly check that \\

$\{0,d+1,2d+2\}, \hspace{3mm} \{0,d+1,2d+3\}, \hspace{3mm} \ldots, \hspace{3mm} \{0,d+1,3d+1\}, \hspace{3mm} \{0,d+1,3d+2\},$

$\{0,d+2,2d+3\}, \hspace{2mm} \{0,d+2,2d+4\}, \hspace{3mm} \ldots, \hspace{2mm} \{0,d+2,3d+2\},$

$\hspace{9mm} \vdots \hspace{28mm} \vdots$

$\{0,2d,3d+1\}, \hspace{3mm} \{0,2d,3d+2\},$

$\{0,2d+1,3d+2\}$\\
is a complete list of facets of $\text{Ind}(G)$. From this description quickly follows that $\rm{link}_{\rm{Ind}(G)}(0)$ is connected, and $G$ is $S_2$.
\end{proof}

As an immediate consequence, we get the following corollary which is the content of \cite[Proposition 1.6]{HYZ}.

\begin{cor} The cyclic graph $C_n$ of length $n\geq 3$ is $S_2$ if and only if $n = 3,5,$ or $7$. In particular, $C_7$ is the only cyclic graph which is $S_2$ but not Cohen-Macaulay.
\end{cor}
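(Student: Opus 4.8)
The plan is to deduce the corollary directly from Theorem \ref{THM3} by specializing to the case $d=1$, since $C_n = C_n(1)$. With $d=1$, the constraint $n \geq 2d \geq 2$ becomes $n \geq 2$, and we need $n \geq 3$ for a genuine cycle; the conditions from Theorem \ref{THM3} become ``$n \leq 3d+2 = 5$ and $n \neq 2d+2 = 4$'' or ``$n = 4d+3 = 7$''. So I would first observe that for $d=1$ the theorem says $C_n(1)$ is $S_2$ iff $n \in \{2,3,5\}$ or $n = 7$; discarding $n=2$ (not a simple cycle of length $\geq 3$), this gives exactly $n \in \{3,5,7\}$.

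Next I would address the ``in particular'' clause. It suffices to recall which cycles are Cohen-Macaulay: a standard fact (e.g. from \cite{V} or folklore, also recoverable from the hierarchy and Reisner's criterion) is that $C_n$ is Cohen-Macaulay if and only if $n = 3$ or $n = 5$. Indeed $C_3$ and $C_5$ have $\text{Ind}(C_n)$ of dimension $0$ and $1$ respectively; $\text{Ind}(C_3)$ is three points? no --- $\text{Ind}(C_3)$ is three isolated vertices, which is disconnected, so that is wrong. Let me instead argue via links: for $C_3$, $\text{Ind}(C_3)$ has facets $\{0\},\{1\},\{2\}$ of dimension $0$, so it is trivially Cohen-Macaulay (every $0$-dimensional complex is CM). For $C_5$, $\text{Ind}(C_5)$ is the pentagon (a $5$-cycle as a $1$-complex), which is connected and all vertex links are two points, so by Reisner it is Cohen-Macaulay. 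For $C_7$, $\text{Ind}(C_7)$ is a triangulation of... one computes $\tilde H_1 \neq 0$ for the whole complex (it is homotopy equivalent to a circle, or one checks the link of $\emptyset$ is not simply connected in the right homological sense), so $C_7$ is $S_2$ but not Cohen-Macaulay. Thus among $n \in \{3,5,7\}$, exactly $n=7$ fails Cohen-Macaulayness.

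I would write this up compactly: first cite Theorem \ref{THM3} with $d=1$ to get the $S_2$ characterization, then note that Cohen-Macaulayness of $C_n(1,2,\ldots,d)$ is characterized in \cite{VVW} (their Theorem 3.7, already referenced in the introduction) --- for $d=1$ it gives CM iff $n = 3$ or $n = 5$ --- and conclude that $C_7$ is the unique cycle that is $S_2$ but not CM. The main (minor) obstacle is simply making sure the edge cases $n=2$ and the degenerate small values are handled cleanly and that the reference to the Cohen-Macaulay classification is correctly pinned down; there is no substantive difficulty since everything follows by substitution into the already-proved theorem. I would therefore keep the proof to a few lines.

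\begin{proof}
Apply Theorem \ref{THM3} with $d=1$, using $C_n = C_n(1)$. The hypothesis $n\geq 2d\geq 2$ is satisfied for every $n\geq 3$, and the conditions ``$n\leq 3d+2$ and $n\neq 2d+2$'' or ``$n=4d+3$'' become ``$n\leq 5$ and $n\neq 4$'' or ``$n=7$''. Hence, among integers $n\geq 3$, the graph $C_n$ is $S_2$ precisely when $n\in\{3,5,7\}$. For the last assertion, recall from \cite[Theorem 3.7]{VVW} that $C_n(1,2,\ldots,d)$ is Cohen-Macaulay if and only if $n=2d+1$, $n=3d+1$, or $n=3d+2$; for $d=1$ this means $C_n$ is Cohen-Macaulay if and only if $n=3$ or $n=5$. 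Comparing the two lists, $C_7$ is the unique cyclic graph which is $S_2$ but not Cohen-Macaulay.
\end{proof}
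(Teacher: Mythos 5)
Your proof is correct and is exactly the paper's (implicit) argument: the corollary is obtained by specializing Theorem \ref{THM3} to $d=1$ (discarding the degenerate value $n=2$), and the paper presents it precisely as such an immediate consequence. One small correction: the Cohen--Macaulay criterion you attribute to \cite[Theorem 3.7]{VVW} is misquoted --- as you state it ($n=2d+1$, $n=3d+1$, or $n=3d+2$) it would include $n=3d+1=4$ for $d=1$, i.e.\ $C_4$, which is not Cohen--Macaulay; the correct statement, recorded in the example immediately following this corollary from \cite[Theorems 3.4 and 3.5]{VVW}, is that $C_n(1,\ldots,d)$ is Cohen--Macaulay if and only if $n\leq 3d+2$ and $n\neq 2d+2$, which for $d=1$ and $n\geq 3$ gives exactly $n\in\{3,5\}$, so your final conclusion stands once the citation is fixed.
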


\begin{exam} {\rm Vander Meulen {\it et al.} in \cite[Theorems 3.4 and 3.5]{VVW} showed that for $n\geq 2d\geq 2$ one has
\begin{itemize}
\item[(i)] $C_n(1,2,\ldots,d)$ is vertex decomposable/shellable/Cohen-Macaulay if and only if $n\leq 3d+2$ and $n\neq 2d+2$.
\item[(ii)] $C_n(1,2,\ldots,d)$ is Buchsbaum but not Cohen-Macaulay if and only if $n=2d+2$ or $n=4d+3$.
\end{itemize}
Comparing these with Theorem \ref{THM3} yields that $C_n(1,2,\ldots,d)$ is Buchsbaum but not $S_2$ if and only if $n=2d+2$.
}
\end{exam}

\section{Circulants of the form $C_n(d + 1, d + 2,\ldots ,\lfloor\frac{n}{2}\rfloor)$}

In this section we investigate $S_2$ circulant graphs of the form $C_n(d + 1, d + 2,\ldots ,\lfloor\frac{n}{2}\rfloor)$. To do this, we need the following result of Brown and Hoshino on well-coveredness of these circulant graphs.

\begin{thm} \label{THM1}
{\rm (\cite[Theorem 4.2]{BH2})} Let $n $ and $d$ be integers with $n\geq 2d+2$ and $d\geq1$. Then $C_n(d + 1, d + 2,\ldots ,\lfloor\frac{n}{2}\rfloor)$ is well-covered if and only if $n>3d$ or $n= 2d + 2$.
\end{thm}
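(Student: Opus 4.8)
The plan is to pass to the complementary circulant graph. Set $H = C_n(1,2,\dots,d)$, the $d$-th power of the $n$-cycle, and write $\delta(i,j)=\min\{|i-j|,\,n-|i-j|\}$ for the circular distance; since two vertices are non-adjacent in $G=C_n(d+1,\dots,\lfloor n/2\rfloor)$ precisely when $\delta(i,j)\le d$, we have $G=\overline H$. Thus the independent sets of $G$ are exactly the cliques of $H$, maximal independent sets correspond to maximal cliques, and $G$ is well-covered iff all maximal cliques of $H$ are equicardinal. I would begin by describing these cliques: $S\subseteq\mathbb Z_n$ is a clique of $H$ iff all pairwise circular distances in $S$ are $\le d$, equivalently --- writing $g_0,\dots,g_{k-1}$ for the cyclic gap sequence of $S$, so $\sum g_i=n$ --- iff every cyclic window of consecutive gaps has sum $\le d$ or $\ge n-d$. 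Looking at the largest gap gives a dichotomy: either $S$ lies in an arc $[a,a+d]=\{a,a+1,\dots,a+d\}$ of $d+1$ consecutive vertices (an ``arc-clique''), or all gaps are $\le d$ (a ``spread clique''). A routine intersection-of-balls computation shows that, because $n\ge 2d+2$, each arc $[a,a+d]$ is itself a maximal clique of $H$, of size $d+1$.

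For the ``if'' direction I would split into the two cases. If $n=2d+2$ then $\lfloor n/2\rfloor=d+1$ and $G=C_{2d+2}(d+1)$ is a perfect matching on $2d+2$ vertices, so every maximal independent set picks one endpoint from each of its $d+1$ edges and hence has size $d+1$: $G$ is well-covered. If $n>3d$, I would show $H$ has no spread clique: if all gaps were $\le d$, then every two-gap window has sum $\le 2d<n-d$, so by the clique criterion it has sum $\le d$; an easy induction then forces every window of gaps to have sum $\le d$, contradicting that the total is $n>3d>d$. Hence every clique of $H$ is an arc-clique, every maximal clique is a full arc $[a,a+d]$, all have size $d+1$, and $G$ is well-covered.

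For the ``only if'' direction I would argue the contrapositive: assuming $2d+3\le n\le 3d$ (which forces $d\ge 3$), I would produce a maximal clique of $H$ of size $<d+1$; together with the arcs this exhibits maximal cliques of two different sizes, so $G$ is not well-covered. Writing $n=3d-s$ with $0\le s\le d-3$, take
\[
 S \;=\; \{0,1,\dots,s\}\cup\{d\}\cup\{2d\}\ \subseteq\ \mathbb Z_n,\qquad |S|=s+3\le d .
\]
The pairwise distances are $\delta(i,d)=d-i$ and $\delta(i,2d)=d-s+i$ for $0\le i\le s$, and $\delta(d,2d)=d$, all $\le d$, so $S$ is a (spread) clique not contained in any arc of $d+1$ consecutive vertices. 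For maximality I would check that the ``opposite arcs'' $F(u)=\{u+d+1,\dots,u+2d-s-1\}$ --- the vertices at circular distance $>d$ from $u$ --- satisfy $\bigcup_{u\in S}F(u)=\mathbb Z_n\setminus S$: the arcs $F(0),\dots,F(s)$ telescope to $\{d+1,\dots,2d-1\}$, while $F(d)=\{2d+1,\dots,n-1\}$ and $F(2d)=\{s+1,\dots,d-1\}$, and the three ranges together are exactly the complement of $S$.

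The easy pieces are the complement reduction and the $n=2d+2$ and $n>3d$ cases. I expect the main obstacle to be the ``only if'' direction: recognizing that it is precisely the appearance of small \emph{spread} cliques (as opposed to arc-cliques) that breaks well-coveredness in the window $2d+3\le n\le 3d$, isolating the correct uniform family $S=\{0,\dots,s\}\cup\{d,2d\}$, and carrying out the arc arithmetic verifying both that $S$ is a clique and that nothing can be adjoined to it --- that bookkeeping genuinely depends on $s=3d-n$. I would also be careful over the auxiliary claim that each $[a,a+d]$ is a maximal (not merely a) clique for every $n\ge 2d+2$, which again reduces to an arc-intersection identity.
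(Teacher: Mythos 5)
Your argument is correct and complete. Note, however, that the paper does not prove this statement at all: it is imported verbatim as \cite[Theorem 4.2]{BH2} (Brown--Hoshino), so there is no internal proof to compare against. Your route --- passing to the complement $H=C_n(1,\dots,d)$, classifying cliques of $H$ by their cyclic gap sequence into arc-cliques (some gap $>d$, hence contained in an arc $[a,a+d]$) and spread cliques (all gaps $\le d$), showing arcs are always maximal of size $d+1$, killing spread cliques when $n>3d$ by the two-gap-window induction, and exhibiting the explicit small maximal spread clique $S=\{0,\dots,s\}\cup\{d,2d\}$ with $s=3d-n$ in the range $2d+3\le n\le 3d$ --- checks out in every detail I verified: the identity $G=\overline H$, the maximality of each arc $[a,a+d]$ under $n\ge 2d+2$, the distance computations $\delta(i,2d)=d-s+i\le d$, and the covering identity $\bigcup_{u\in S}F(u)=\mathbb Z_n\setminus S$ (using $s\le d-3$ to make the arcs $F(0),\dots,F(s)$ overlap consecutively). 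This is essentially the same combinatorial content as Brown--Hoshino's original argument, which works directly with independent sets of $G$ as vertex sets of pairwise circular distance at most $d$; the complement formulation is only a change of language. The one place worth a sentence more of care is the degenerate end of the dichotomy ($|S|\le 1$, where the gap sequence is trivial), but it costs nothing.
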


\begin{defn} {\rm A simplicial complex $\Delta$ is called {\it shellable} if there is a linear order
$F_1 , \ldots, F_s$ of all the facets of $\Delta $ such that for all
$1\leq i<j\leq s$, there exists some $v\in F_j\setminus F_i$ and
some $l\in \{1 , \ldots, j-1\}$ with $F_j\setminus F_l=\{v\}$. $F_1 , \ldots, F_s$ is called a shelling order of $\Delta$.}
\end{defn}

We will make use the next two Lemmas to prove Theorem \ref{MAIN1} which is the main result of this section.

\begin{lem} \label{LEM1}
Let $\Delta$ be a $d$-dimensional pure simplicial complex on the vertex set $V=\{x_1,x_2,\ldots,x_n\}$ whose facets are given by
$$F_i=\{x_i,x_{i+1},\ldots,x_{i+d}\}\hspace{3mm}:\hspace{3mm}i=1,2,\ldots,n-d.$$
Then $\Delta$ is pure shellable.
\end{lem}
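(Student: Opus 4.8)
The plan is to exhibit an explicit shelling order for $\Delta$ by taking the facets in their natural index order $F_1, F_2, \ldots, F_{n-d}$. Recall that $F_i = \{x_i, x_{i+1}, \ldots, x_{i+d}\}$, so consecutive facets overlap in $d$ vertices: $F_i \cap F_{i+1} = \{x_{i+1}, \ldots, x_{i+d}\}$, which has cardinality $d = |F_i| - 1$. More generally, for $i < j$ we have $F_i \cap F_j = \{x_j, x_{j+1}, \ldots, x_{i+d}\}$ when $j \le i+d$, and $F_i \cap F_j = \emptyset$ when $j > i+d$.

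To verify the shellability condition: fix $1 \le i < j \le n-d$. I need to produce $v \in F_j \setminus F_i$ and some $l < j$ with $F_j \setminus F_l = \{v\}$. The natural candidate for $l$ is $j-1$, and the natural candidate for $v$ is $x_{j+d}$, the unique vertex of $F_j$ not in $F_{j-1}$ (indeed $F_j \setminus F_{j-1} = \{x_{j+d}\}$). So it remains to check that $x_{j+d} \notin F_i$: since $i < j$, the largest index appearing in $F_i$ is $i+d < j+d$, hence $x_{j+d} \notin F_i$, as required. Thus with $l = j-1$ and $v = x_{j+d}$ the condition is satisfied for every pair $i<j$, and $F_1, \ldots, F_{n-d}$ is a shelling order. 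Purity is given by hypothesis (every facet has $d+1$ elements), so $\Delta$ is pure shellable.

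I do not anticipate a genuine obstacle here: the argument is entirely combinatorial and reduces to the elementary observation that the maximal index in $F_i$ is strictly smaller than the maximal index in $F_j$ whenever $i < j$. The only point that warrants a sentence of care is the edge case — one should note that $j - 1 \geq 1$ automatically since $j > i \geq 1$, so $l = j-1$ is always a legitimate earlier facet, and that the formula $F_j \setminus F_{j-1} = \{x_{j+d}\}$ holds for all $2 \le j \le n-d$ because the facets are genuinely indexed by a contiguous block. No subtlety involving the field or topology enters, since shellability of a pure complex is a purely combinatorial property.
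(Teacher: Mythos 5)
Your proof is correct and takes the same route as the paper: the paper simply asserts that $F_1,\ldots,F_{n-d}$ is the desired shelling order, and you supply the verification (with $l=j-1$ and $v=x_{j+d}$) that the paper leaves to the reader.
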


\begin{proof} It is easy to see that $F_1,F_2,\dots,F_{n-d}$ is the desired shelling order on the facets of $\Delta$.
\end{proof}

\begin{lem} \label{LEM2} Let $\Delta$ be a shellable simplicial complex and $F_1,F_2,\dots,F_s$ a shelling order of $\Delta$. Also let $F\in\Delta$ be such that $F\subseteq F_i$ for all $i=1,2,\ldots,s$. Then $\rm{link}_\Delta(F)$ is shellable.
\end{lem}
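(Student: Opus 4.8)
The plan is to produce an explicit shelling order on the facets of $\mathrm{link}_\Delta(F)$ induced by the given shelling $F_1,\dots,F_s$ of $\Delta$. First I would record the standard fact that since $F\subseteq F_i$ for every $i$, the faces of $\Delta$ containing $F$ are exactly the faces of the form $F\cup G$ with $G\in\mathrm{link}_\Delta(F)$, and in particular the facets of $\mathrm{link}_\Delta(F)$ are precisely $G_i:=F_i\setminus F$ for $i=1,\dots,s$ (after deleting repetitions, but in fact the $F_i$ are distinct facets containing $F$, so the $G_i$ are distinct). Thus $\mathrm{link}_\Delta(F)$ is pure of dimension $\dim\Delta - |F|$, and I claim $G_1,G_2,\dots,G_s$ is a shelling order.

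To verify the claim, fix $1\le i<j\le s$. By the shelling property of $\Delta$ applied to the pair $F_i,F_j$, there exist $v\in F_j\setminus F_i$ and $l<j$ with $F_j\setminus F_l=\{v\}$. The key observation is that $v\notin F$: indeed $v\in F_j\setminus F_i$, but $F\subseteq F_i$, so $v\in F$ would force $v\in F_i$, a contradiction. Hence $v\in F_j\setminus F = G_j$, and moreover $v\notin F_i\supseteq F$ gives $v\in G_j\setminus G_i$. It remains to translate $F_j\setminus F_l=\{v\}$ into a statement about the $G$'s: since $F\subseteq F_l$ and $F\subseteq F_j$ and $v\notin F$, removing $F$ from both sides yields $G_j\setminus G_l = (F_j\setminus F)\setminus(F_l\setminus F) = (F_j\setminus F_l)\setminus F = \{v\}$. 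Thus for every $i<j$ we have found $v\in G_j\setminus G_i$ and $l<j$ with $G_j\setminus G_l=\{v\}$, which is exactly the shelling condition for $\mathrm{link}_\Delta(F)$.

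I do not anticipate a serious obstacle here; the only points requiring care are (a) confirming that the facets of the link are exactly the $F_i\setminus F$ — this uses the hypothesis $F\subseteq F_i$ for \emph{all} $i$ in an essential way, since otherwise some $F_i$ would contribute nothing to the link and the indexing would break — and (b) the set-theoretic identity $(F_j\setminus F_l)\setminus F = (F_j\setminus F)\setminus(F_l\setminus F)$, valid precisely because $F\subseteq F_l$. Both are routine once stated. One should also note the degenerate case $F=\emptyset$, where $\mathrm{link}_\Delta(\emptyset)=\Delta$ and the statement is trivial, and the case where $F$ is itself a facet, where the link is $\{\emptyset\}$ and is vacuously shellable; neither causes difficulty.
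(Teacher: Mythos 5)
Your proof is correct and takes the same route as the paper: the paper simply asserts that $\mathrm{link}_\Delta(F)=\langle F_1\setminus F,\ldots,F_s\setminus F\rangle$ with this order giving the shelling, while you supply the (routine but worthwhile) verification that $v\notin F$ and that $(F_j\setminus F)\setminus(F_l\setminus F)=(F_j\setminus F_l)\setminus F$.
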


\begin{proof} It is clear that $\text{link}_\Delta(F)=\langle F_1\setminus F,F_2\setminus F,\dots,F_s\setminus F\rangle$, and that this order is the desired shelling order of the facets of $\text{link}_\Delta(F)$.
\end{proof}

We also need the following result on circulants of the form $C_n(d + 1, d + 2,\ldots ,\lfloor\frac{n}{2}\rfloor)$.

\begin{thm} \label{THM4}
{\rm (\cite[Theorem 3.3]{EVV})} Let $n $ and $d$ be integers with $n\geq 2d+2$ and $d\geq1$. The following are equivalent:
\begin{itemize}
\item[(i)] $C_n(d + 1, d + 2,\ldots ,\lfloor\frac{n}{2}\rfloor)$ is Buchsbaum;
\item[(ii)] $C_n(d + 1, d + 2,\ldots ,\lfloor\frac{n}{2}\rfloor)$ is well-covered;
\item[(iii)] $n>3d$ or $n= 2d + 2$.
\end{itemize}
Furthermore, $C_n(d + 1, d + 2,\ldots ,\lfloor\frac{n}{2}\rfloor)$ is vertex decomposable/shellable/Cohen-Macaulay if and only if $n=2d+2$ and $d\geq 1$ or $d=1$ and $n\geq 3$.
\end{thm}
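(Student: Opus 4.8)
The plan is to settle the three-term equivalence first and then the ``furthermore'' clause. The equivalence (ii)$\Leftrightarrow$(iii) is exactly Theorem~\ref{THM1}, so nothing more is needed there. For (i)$\Rightarrow$(ii) I would use the standard fact that a Buchsbaum Stanley--Reisner ring has a pure defining complex (the zero-dimensional case being trivial): if $\mathrm{Ind}(C_n(d+1,\dots,\lfloor n/2\rfloor))$ is Buchsbaum it is pure, which by definition says the graph is well-covered. Thus the whole content of the first assertion lies in (iii)$\Rightarrow$(i), and the remaining work is the last sentence.

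For (iii)$\Rightarrow$(i), set $G=C_n(d+1,\dots,\lfloor n/2\rfloor)$ and $\Delta=\mathrm{Ind}(G)$; by what was just said, $\Delta$ is pure. I would invoke the well-known criterion (Schenzel) that a pure complex is Buchsbaum if and only if the link of each of its vertices is Cohen--Macaulay. Since circulant graphs are vertex-transitive, it suffices to treat $\mathrm{link}_\Delta(0)$. Its vertex set consists of the vertices other than $0$ that are non-adjacent to $0$, namely the $2d$ vertices $\{1,\dots,d\}\cup\{n-d,\dots,n-1\}$, and a direct computation of circular distances---carried out separately in the ranges $n>3d$ and $n=2d+2$---shows that no two vertices on the same side of $0$ are adjacent in $G$, while $a\in\{1,\dots,d\}$ and $n-b$ with $b\in\{1,\dots,d\}$ are adjacent precisely when $a+b\ge d+1$ (if $n>3d$), respectively precisely when $a+b=d+1$ (if $n=2d+2$). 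In the first case the facets of $\mathrm{link}_\Delta(0)$ are the sets $\{a_1,\dots,a_p\}\cup\{b_1,\dots,b_{d-p}\}$ for $0\le p\le d$, and after relabelling the $2d$ vertices in a suitable consecutive order these become exactly the $d+1$ ``interval'' facets $\{x_{p+1},\dots,x_{p+d}\}$; hence $\mathrm{link}_\Delta(0)$ is pure shellable by Lemma~\ref{LEM1}, and in particular Cohen--Macaulay. In the second case $G$ induces a perfect matching on these $2d$ vertices, so $\mathrm{link}_\Delta(0)$ is a join of $d$ copies of $S^0$, a shellable sphere. Either way every vertex link is Cohen--Macaulay, so $\Delta$ is Buchsbaum.

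For the ``furthermore'' clause the ``if'' direction is quick: if $d=1$ then $\Delta$ is the clique complex of the $n$-cycle, i.e.\ a simplex when $n=3$ and the $1$-dimensional cycle complex when $n\ge4$, both vertex decomposable; if $n=2d+2$ then $G=C_{2d+2}(d+1)$ is a perfect matching, so $\Delta$ is the boundary complex of a cross-polytope (a sphere), again vertex decomposable. For the ``only if'' direction it remains to show that $\Delta$ is not Cohen--Macaulay whenever $d\ge2$ and $n>3d$. I would first pin down $\Delta$: since $C_n(1,\dots,d)$ is the complement of $G$, $\Delta$ is the clique complex of $C_n(1,\dots,d)$, and a gap-counting argument on cyclic intervals shows that for $n>3d$ every clique of $C_n(1,\dots,d)$ lies in a block of $d+1$ consecutive vertices; hence the facets of $\Delta$ are precisely the $n$ blocks $\{i,i+1,\dots,i+d\}$, $i\in\mathbb{Z}_n$, and $\dim\Delta=d$. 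Building $\Delta$ up one block at a time---each new block glued to the union of the previous ones along a single simplex, except at the final wrap-around step where the gluing is along a space homotopy equivalent to $S^0$---shows $\Delta\simeq S^1$, so $\widetilde H_1(\Delta;k)=k\neq0$. Since $d\ge2$ this is nonzero reduced homology in a degree below $\dim\Delta$, so $\Delta$ fails Reisner's criterion at the empty face and is not Cohen--Macaulay, hence---being pure---neither shellable nor vertex decomposable; as $\Delta$ was already shown to be Buchsbaum, this yields the promised Buchsbaum-but-not-Cohen--Macaulay examples.

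I expect the main obstacle to be the ``only if'' half of the last clause. The gap-counting that identifies the maximal cliques of $C_n(1,\dots,d)$ for $n>3d$ must be done carefully (and the sub-ranges $3d<n<4d$ and $n\ge4d$ behave somewhat differently), and after that one still has to determine the homotopy type of the resulting cyclic interval complex. Everything else reduces to Theorem~\ref{THM1}, to Schenzel's characterisation of Buchsbaum complexes together with the purity coming from well-coveredness, and to a single application of Lemma~\ref{LEM1}.
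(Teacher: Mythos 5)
This theorem is imported verbatim from \cite[Theorem 3.3]{EVV}; the paper gives no proof of it, so your argument is a from-scratch reconstruction rather than a variant of anything in the text. The overall architecture is sound and is a legitimate route: (ii)$\Leftrightarrow$(iii) via Theorem~\ref{THM1}, (i)$\Rightarrow$(ii) from purity of Buchsbaum complexes, and (iii)$\Rightarrow$(i) via Schenzel's criterion (pure plus Cohen--Macaulay vertex links) together with vertex-transitivity. Your computation of the induced subgraph on the non-neighbours of $0$ is correct in both regimes: for $n>3d$ the maximal independent sets there are exactly $\{1,\dots,d-p\}\cup\{n-p,\dots,n-1\}$, $0\le p\le d$, which under the consecutive relabelling are the $d+1$ interval facets, so Lemma~\ref{LEM1} applies; for $n=2d+2$ the link is a join of $d$ copies of $S^0$. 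The identification of the facets of $\Delta$ as the $n$ cyclic blocks $\{i,\dots,i+d\}$ for $n>3d$ also goes through (any independent set lies within circular distance $d$ of each of its points, hence in an arc of $2d+1<n$ vertices, and then the diameter bound forces it into a block).

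Two points need repair. First, in the homotopy computation of $\Delta$ for $n>3d$, $d\ge2$: adding the facets in the order $F_0,F_1,\dots,F_{n-1}$, the attachment of $F_j$ to $F_0\cup\dots\cup F_{j-1}$ is a single simplex only for $j\le n-d-1$; the step that attaches along a space homotopy equivalent to $S^0$ is the \emph{first} wrap-around facet $F_{n-d}=\{n-d,\dots,n-1,0\}$ (it meets the previous union in the simplex on $\{n-d,\dots,n-1\}$ together with the isolated vertex $0$), and the remaining $d-1$ wrap-around facets $F_{n-d+1},\dots,F_{n-1}$ attach along contractible \emph{unions} of simplices, not single simplices. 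The conclusion $\Delta\simeq S^1$, hence $\tilde H_1(\Delta;k)\ne0$ in degree $1<d=\dim\Delta$, is correct, but the bookkeeping as you stated it is false and should be rewritten. Second, your ``only if'' argument for the last clause reduces everything to the Cohen--Macaulay case via ``CM $\Rightarrow$ pure $\Rightarrow$ well-covered,'' and then dismisses shellability and vertex decomposability ``being pure.'' This silently assumes the pure versions of those notions. In the range $2d+2<n\le 3d$ with $d\ge2$ the complex is not pure, and if shellable/vertex decomposable are meant in the non-pure Bj\"orner--Wachs sense (as they are in \cite{EVV} and \cite{VVW}), that range requires a separate argument which your proof does not supply.
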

Now we are ready to state and prove the main result of this section.

\begin{thm} \label{MAIN1}
Let $n $ and $d$ be integers with $n\geq 2d+2$ and $d\geq1$. Then $C_n(d + 1, d + 2,\ldots ,\lfloor\frac{n}{2}\rfloor)$ is $S_2$ if and only if $n>3d$ or $n= 2d + 2$.
\end{thm}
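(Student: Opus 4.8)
The plan is to prove both directions, with the forward direction being immediate and the reverse direction requiring the real work. For the forward direction, if $G = C_n(d+1,\ldots,\lfloor n/2\rfloor)$ is $S_2$, then $G$ is well-covered, so Theorem \ref{THM1} (equivalently Theorem \ref{THM4}) gives $n > 3d$ or $n = 2d+2$. So assume $n > 3d$ or $n = 2d+2$ and aim to show $S_2$, i.e. (by the Corollary to Terai's theorem) that $\mathrm{link}_{\Delta}(F)$ is connected for every face $F \in \Delta = \mathrm{Ind}(G)$ with $\dim\mathrm{link}_{\Delta}(F) \geq 1$. The case $n = 2d+2$ is degenerate and is already handled: Theorem \ref{THM4} says $G$ is Cohen-Macaulay there, hence $S_2$. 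So the heart of the matter is $n > 3d$.

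For $n > 3d$, first I would describe $\Delta = \mathrm{Ind}(G)$ concretely. A set $W \subseteq \{0,\ldots,n-1\}$ is independent in $C_n(d+1,\ldots,\lfloor n/2\rfloor)$ iff no two of its elements are at cyclic distance in $\{d+1,\ldots,\lfloor n/2\rfloor\}$, i.e. any two distinct elements of $W$ are at cyclic distance $\le d$; equivalently $W$ is contained in some arc of $d+1$ consecutive vertices on the cycle $\mathbb{Z}_n$. Hence the facets of $\Delta$ are exactly the $n$ arcs $A_j = \{j, j+1, \ldots, j+d\}$ (indices mod $n$) for $j = 0,\ldots,n-1$ — here $n > 3d \ge 3$ ensures these arcs are genuinely distinct and the complex is pure of dimension $d-1$. (For $d=1$ this recovers that $\Delta$ is the cycle $C_n$, already known Cohen-Macaulay for the relevant $n$ via Theorem \ref{THM4}, so one may assume $d \ge 2$.) Connectivity of $\Delta$ itself (the link of $\emptyset$, when $\dim\Delta = d-1 \ge 1$) is clear since consecutive arcs $A_j, A_{j+1}$ overlap in $d \ge 1$ vertices. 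It remains to check links of nonempty faces.

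So fix a nonempty independent set $F$ with $\dim\mathrm{link}_\Delta(F) \ge 1$, meaning there are at least two vertices extending $F$ that are not forced to coincide — more precisely $\mathrm{link}_\Delta(F)$ has a facet of size $\ge 2$. Write $F = \{a, a+1, \ldots\}$ as sitting inside arcs; the facets of $\Delta$ containing $F$ are precisely the arcs $A_j$ with $F \subseteq A_j$, and these form a \emph{consecutive} block of arcs $A_{j}, A_{j+1}, \ldots, A_{j+m}$ for some range (because containing $F$ means the arc's left endpoint lies in a contiguous set of residues). The key point is that $n > 3d$ guarantees that as $j$ ranges over this block the arcs $A_{j+\ell}$ do \emph{not} wrap around, so $\bigcup_\ell A_{j+\ell}$ is itself an arc, say $\{p, p+1, \ldots, q\}$ containing $F$. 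Then $\mathrm{link}_\Delta(F) = \langle A_j \setminus F, \ldots, A_{j+m}\setminus F\rangle$, and I claim this complex is exactly of the form covered by Lemma \ref{LEM1}: its facets are the "sliding windows" of fixed length $d+1-|F|$ inside the arc $\{p,\ldots,q\}\setminus F$ — once one notes $F$ is an initial (or, by relabeling, a fixed interior) segment, removing $F$ from each consecutive arc yields consecutive windows. Hence $\mathrm{link}_\Delta(F)$ is pure shellable by Lemma \ref{LEM1}, in particular connected, and $S_2$ follows. Along the way this also proves the "furthermore" claim in the paper's section introduction that every nonempty link is shellable while the link of $\emptyset$ (i.e. $\Delta$) need not be.

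The main obstacle I expect is bookkeeping the "no wrap-around" argument cleanly: one must verify that when $n > 3d$, for any nonempty $F$ the set of arcs containing $F$ spans fewer than $n - d$ starting positions in a way that keeps the union an honest (non-cyclic) arc, and then match $\mathrm{link}_\Delta(F)$ precisely with the hypothesis of Lemma \ref{LEM1} (sliding windows indexed $1,\ldots,N-D$ over a linear vertex set). A secondary subtlety is correctly pinning down when $\dim\mathrm{link}_\Delta(F)\ge 1$ versus $\le 0$, so that we only assert connectivity where Terai's criterion demands it; faces $F$ whose link is a single vertex or empty impose no condition. The boundary cases $d=1$ and $n=2d+2$ should be disposed of separately by quoting Theorem \ref{THM4}, as indicated above.
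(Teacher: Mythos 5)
Your proposal is correct and follows essentially the same route as the paper: describe the facets of $\mathrm{Ind}(G)$ as the $n$ consecutive arcs $\{j,\ldots,j+d\}$, observe that the arcs containing a nonempty face $F$ form a consecutive block whose union has at most $2d+1<n$ vertices (so no wrap-around), and conclude that $\mathrm{link}_\Delta(F)$ is a sliding-window complex, hence pure shellable by Lemma \ref{LEM1} (the paper packages the passage to the link via Lemma \ref{LEM2}), while the degenerate cases $n=2d+2$ and $d=1$ are handled by Theorem \ref{THM4}. Two harmless slips: the facets have $d+1$ elements so $\dim\Delta=d$ (not $d-1$), and $F$ need not be a contiguous segment of an arc — but deleting an arbitrary $F$ from the consecutive block of arcs still yields consecutive windows in the induced order, which is exactly what Lemma \ref{LEM2} guarantees.
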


\begin{proof} Let $G=C_n(d + 1, d + 2,\ldots ,\lfloor\frac{n}{2}\rfloor)$ with $n\geq 2d+2$ and $d\geq1$. If $G$ is $S_2$, then $G$ is well-covered and the result follows from Theorem \ref{THM1}. Now we prove the converse. If $n= 2d + 2$, then $G=C_{2d+2}(d + 1)$ is the union of $(d+1)$ disjoint edges. It follows that $G$ is Cohen-Macaulay, and hence it is $S_2$. So assume $n>3d$ and $d\geq 1$, and let $\Delta=\text{Ind}(G)$. By the proof of \cite[Theorem 3.3]{EVV} one has $\text{Ind}(G)=\langle F_0,\ldots,F_{n-1}\rangle$, where $F_i=\{i,i+1,\ldots,i+d\}$ for all $i=0,\ldots,n-1$, and the indices computed modulo $n$. If $\text{dim}\text{link}_\Delta(F)=d$, then $\text{link}_\Delta(F)=\Delta$, which is a connected simplicial complex that is not shellable except for $d=1$ (see Theorem \ref{THM4}) and there is nothing to prove. So it is enough to show that $\text{link}_\Delta(F)$ is connected for each face $F\in\Delta$ with $0<\text{dim}\text{link}_\Delta(F)<d$. We show that $\text{link}_\Delta(F)$ is indeed shellable.\\
If $d=1$, then the only case where $\text{dim}\text{link}_\Delta(F)>0$ is for $F=\emptyset$ which in this case $\text{link}_\Delta(F)=\Delta$ is shellable. So suppose $d>1$. Assume that $\text{dim}\text{link}_\Delta(F)=d-t$ where $0<t<d$. It follows that $|F|=t$. Without loos of generality we may assume $F\subseteq F_0$. Suppose $F=\{j_1,j_2,\ldots,j_t\}$ with $0\leq j_1<j_2<\ldots<j_t\leq d$. We claim that
$$F_{n-d+j_t},F_{n-d+j_t+1},\ldots,F_{j_1-1},F_{j_1}$$
is a complete list of the facets of $\text{Ind}(G)$ that contain $F$, where the indices computed modulo $n$. To see this, It suffices to notice that one has

$F_{n-d+j_t}=\{n-d+j_t,n-d+j_t+1,\ldots,j_t-1,j_t\},$

$F_{n-d+j_t+1}=\{n-d+j_t+1,\ldots,j_t,j_t+1\},$

$~~~\vdots$

$F_{j_1-1}=\{j_1-1,j_1,\ldots,j_1+d-1\},$

$F_{j_1}=\{j_1,j_1+1,\ldots,j_1+d\}.$\\
It follows from Lemma \ref{LEM1} that $\Delta'=\langle F_{n-d+j_t},F_{n-d+j_t+1},\ldots,F_{j_1-1},F_{j_1}\rangle$ is a pure shellable simplicial complex on the vertex set $V'=\{n-d+j_t,n-d+j_t+1,\ldots,j_1+d\}$ (Note that $|V'|=2d-(j_t-j_1)+1\leq 2d+1<n$, and that $n-d+j_t,n-d+j_t+1,\ldots,j_1+d$ are all distinct since $j_1+d\leq 2d$ and $n-d+j_t>2d+j_t\geq 2d$). Set
$$F'_0=F_{n-d+j_t}\setminus F \hspace{3mm},\hspace{3mm}  F'_1=F_{n-d+j_t+1}\setminus F \hspace{3mm},\hspace{3mm}\ldots \hspace{3mm},\hspace{3mm}F'_{d-(j_t-j_1)+1}=F_{j_1}\setminus F.$$
One can easily check that
$$\text{link}_\Delta(F)=\langle F'_0,F'_1,\ldots,F'_{d-(j_t-j_1)+1}\rangle.$$
Now it follows from Lemma \ref{LEM2} that $\text{link}_\Delta(F)$ is shellable, as desired.
\end{proof}

\begin{rem} {\rm Let $G=C_n(d + 1, d + 2,\ldots ,\lfloor\frac{n}{2}\rfloor)$, where $n>3d$ and $d>1$. Proof of Theorem \ref{MAIN1} shows that if $\Delta =\rm{Ind}(G)$, then the only non-shellable (connected) link in $\Delta$ is the link of $\emptyset$ which is $\Delta$, and the link of all non-empty faces of $\Delta$ are shellable.}
\end{rem}

\begin{cor} Let $n $ and $d$ be integers with $n\geq 2d+2$ and $d\geq1$. For $G=C_n(d + 1, d + 2,\ldots ,\lfloor\frac{n}{2}\rfloor)$ and $\Delta=\rm{Ind}(G)$ the following are equivalent:
\begin{itemize}
\item[(i)] $G$ is $S_2$;
\item[(ii)] $G$ is Buchsbaum;
\item[(iii)] $G$ is well-covered;
\item[(iv)] $n>3d$ or $n= 2d + 2$;
\item[(v)] $\Delta$ is strongly connected and $\rm{link}_\Delta(F)$ is (pure) shellable for all $F\in\Delta$ with $\rm{dim}\rm{link}_\Delta(F)<d$;
\item[(vi)]  $\rm{link}_\Delta(F)$ is strongly connected for all $F\in\Delta$ with $0<\rm{dim}\rm{link}_\Delta(F)\leq d$.
\end{itemize}
\end{cor}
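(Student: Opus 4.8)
The plan is to establish the cycle of implications
$$\text{(i)}\Leftrightarrow\text{(ii)}\Leftrightarrow\text{(iii)}\Leftrightarrow\text{(iv)}\Rightarrow\text{(v)}\Rightarrow\text{(vi)}\Rightarrow\text{(i)}.$$
The equivalences among (i)--(iv) need no new work: (i) $\Leftrightarrow$ (iv) is Theorem \ref{MAIN1}, (iii) $\Leftrightarrow$ (iv) is Theorem \ref{THM1}, and (ii) $\Leftrightarrow$ (iii) is contained in Theorem \ref{THM4}. Hence the entire content lies in the three remaining implications, and in practice almost entirely in (iv) $\Rightarrow$ (v).

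For (iv) $\Rightarrow$ (v) I would split into two regimes. If $n=2d+2$ or $d=1$, then by Theorem \ref{THM4} the complex $\Delta$ is (vertex decomposable, hence) shellable; since a shellable complex of dimension at least $1$ is strongly connected (here $\dim\Delta=d\geq1$) and links of faces of a shellable complex are again shellable, both clauses of (v) are immediate. If instead $n>3d$ and $d>1$, I would use the explicit description $\Delta=\langle F_0,\dots,F_{n-1}\rangle$ with $F_i=\{i,i+1,\dots,i+d\}$ (indices modulo $n$) recorded in the proof of Theorem \ref{MAIN1}. Since $F_i\cap F_{i+1}=\{i+1,\dots,i+d\}$ has exactly $d=|F_i|-1$ elements, the chain $F_0,F_1,\dots,F_{n-1}$ shows $\Delta$ is strongly connected. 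For a non-empty face $F$ one has $\dim\text{link}_\Delta(F)=d-|F|<d$; when $1\leq|F|\leq d-1$ the proof of Theorem \ref{MAIN1} exhibits a shelling of $\text{link}_\Delta(F)$, and when $|F|=d$ the link is $0$-dimensional (a single vertex when $F$ omits an interior point of the unique facet containing it, two vertices when $F$ omits an endpoint), hence trivially pure shellable. This establishes (v).

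The remaining two implications are short. Assuming (v), take $F\in\Delta$ with $0<\dim\text{link}_\Delta(F)\leq d$: if $\dim\text{link}_\Delta(F)=d$ then $F=\emptyset$ and $\text{link}_\Delta(F)=\Delta$ is strongly connected by (v); if $1\leq\dim\text{link}_\Delta(F)<d$ then (v) gives that $\text{link}_\Delta(F)$ is shellable, hence strongly connected as it has positive dimension --- this is (vi). Assuming (vi), the combinatorial criterion for $S_2$ recalled in the introduction says $G$ is $S_2$ iff $\text{link}_\Delta(F)$ is connected whenever $\dim\text{link}_\Delta(F)\geq1$; since $\dim\text{link}_\Delta(F)\leq\dim\Delta=d$ always holds, every such link is strongly connected by (vi) and in particular connected, so $G$ is $S_2$. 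This closes the cycle.

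I expect the main obstacle to be the bookkeeping in (iv) $\Rightarrow$ (v): the argument in the proof of Theorem \ref{MAIN1} only treats links of dimension strictly between $0$ and $d$, so the two borderline cases --- the link of $\emptyset$, which equals $\Delta$, and the $0$-dimensional links of $d$-element faces --- must be handled by hand, and one must carefully invoke the standard facts that links of a shellable complex are shellable and that a shellable complex of positive dimension is strongly connected. Everything else is routine.
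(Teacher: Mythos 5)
Your proof is correct and follows essentially the same route as the paper: the equivalences (i)--(iv) are quoted from Theorems \ref{MAIN1}, \ref{THM1}, and \ref{THM4}, conditions (v) and (vi) are read off from the facet description of $\Delta$ and of $\mathrm{link}_\Delta(F)$ given in the proof of Theorem \ref{MAIN1}, and the cycle is closed via the connectivity criterion for $S_2$. You merely supply more detail than the paper does for the borderline cases (the link of $\emptyset$, the $0$-dimensional links, and the regimes $n=2d+2$ and $d=1$).
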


\begin{proof} The equivalent of (ii), (iii), and (iv) is Theorem \ref{THM4}. Also (i) and (iv) are equivalent by Theorem \ref{MAIN1}. On the other hand, (v) and (vi) follow from the description of facets of $\Delta$ and $\rm{link}_\Delta(F)$ in the proof of Theorem \ref{MAIN1}. Finally, if (v) or (vi) hold, then $\Delta$ is pure and $\rm{link}_\Delta(F)$ is connected for all $F\in\Delta$ with $\rm{dim}\rm{link}_\Delta(F)>0$, i.e., $\Delta$ is $S_2$.
\end{proof}

\section{Circulants of the form $C_n(1,\ldots,\hat{i},\ldots,\lfloor\frac{n}{2}\rfloor)$}

Moussi \cite[Theorem 6.4]{M} proved that Circulants of the form $C_n(1,\ldots,\hat{i},\ldots,\lfloor\frac{n}{2}\rfloor)$ are well-covered. Earl, Vander Meulen, and Van Tuyl \cite[Section 4]{EVV} showed that these circulants are always Buchsbaum, and they are Cohen-Macaulay if and only if $\text{gcd}(i,n)=1$. In this section we show that this condition is equivalent to $S_2$ property.

\begin{thm} Let $G=C_n(1,\ldots,\hat{i},\ldots,\lfloor\frac{n}{2}\rfloor)$. The following are equivalent:
\begin{itemize}
\item[(i)] $G$ is $S_2$;
\item[(ii)] $G$ is Cohen-Macaulay;
\item[(iii)] ${\rm gcd}(i,n)=1$.
\end{itemize}
\end{thm}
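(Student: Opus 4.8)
The plan is to prove the chain of implications $(ii)\Rightarrow(i)\Rightarrow(iii)\Rightarrow(ii)$, where the implications $(ii)\Rightarrow(i)$ and $(iii)\Rightarrow(ii)$ are essentially free. Indeed, Cohen-Macaulayness always implies $S_2$ (a module satisfying $\text{depth}=\text{dim}$ locally certainly satisfies $S_2$), so $(ii)\Rightarrow(i)$ is immediate; and the equivalence of $(ii)$ and $(iii)$ is exactly \cite[Theorem 4.2]{EVV}, which gives $(iii)\Rightarrow(ii)$ (and also $(ii)\Rightarrow(iii)$, but we do not need that direction since we will obtain it as a consequence of the cycle). Thus the entire mathematical content is the implication $(i)\Rightarrow(iii)$: if $G$ is $S_2$ then $\gcd(i,n)=1$. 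Equivalently, I would prove the contrapositive: if $t=\gcd(i,n)>1$, then $\text{Ind}(G)$ fails $S_2$, which by the Corollary of Terai's theorem means exhibiting a face $F\in\text{Ind}(G)$ with $\dim\text{link}_{\text{Ind}(G)}(F)\geq 1$ whose link is disconnected.

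The key structural input is the description of the facets of $\text{Ind}(G)$ for $G=C_n(1,\ldots,\hat{\imath},\ldots,\lfloor n/2\rfloor)$ worked out in \cite[Section 4]{EVV}. The point is that an independent set of $G$ cannot contain two vertices whose circular distance is any value in $\{1,\ldots,\lfloor n/2\rfloor\}\setminus\{i\}$; so the only pairs of distinct vertices that may coexist in an independent set are those at circular distance exactly $i$. Consequently the maximal independent sets are precisely the vertex sets of the connected components of the graph on $\{0,\ldots,n-1\}$ with edges $\{v,v+i \bmod n\}$, and when $t=\gcd(i,n)>1$ this graph is a disjoint union of $t$ cycles, each of length $n/t$, with the residue classes modulo $t$ as vertex sets. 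The facets of $\text{Ind}(G)$ are then (the vertex sets of) these $t$ cycles — or, when $n/t$ is small, independent sets within a single cycle; the paper's argument in \cite{EVV} makes this precise, and I would quote it. Granting this, I would take a suitable $1$-dimensional face $F$ — for instance, if $n/t\geq 4$ one can take $F$ to be a single vertex $v$ together with a vertex $v+i$ in the same residue class, or even $F=\emptyset$ when $\dim\Delta\geq 1$ — and observe that two vertices lying in different residue classes modulo $t$ are never joined by an edge in any link, because they are at a forbidden circular distance; so the link splits along residue classes and is disconnected whenever it has dimension at least one. Handling the small cases $n/t=2,3$ separately (where the single cycle degenerates) completes the argument that $S_2$ forces $t=1$.

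The main obstacle I anticipate is bookkeeping: correctly transcribing the facet description from \cite{EVV} and pinning down exactly which face $F$ to use so that $\text{link}_\Delta(F)$ genuinely has dimension $\geq 1$ while splitting into at least two components indexed by residues modulo $t$. In particular one must be careful when $n=2i$ (so $i=n/2$, $\lfloor n/2\rfloor=i$ and $S$ is empty or $C_n(i)$ is a perfect matching) and, more generally, when $n/t$ is $2$ or $3$; in these degenerate regimes the "cycle of length $n/t$" is really an edge or a triangle and the combinatorics of $\text{Ind}(G)$ must be read off directly. Once the disconnected link is produced, invoking the Corollary to Terai's theorem finishes $(i)\Rightarrow(iii)$, and combined with the trivial $(ii)\Rightarrow(i)$ and with \cite[Theorem 4.2]{EVV} for $(iii)\Rightarrow(ii)$ this closes the loop and proves the theorem.
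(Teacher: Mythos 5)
Your proposal is correct in substance but organizes the argument differently from the paper. The paper proves $(i)\Rightarrow(ii)$ directly: by Moussi's result $\dim\mathrm{Ind}(G)=1$ unless $i=n/3$, in which case it is $2$ and the facets are the $i$ pairwise disjoint triangles $\{a,a+i,a+2i\}$; since a pure connected $1$-dimensional complex is Cohen--Macaulay, the $S_2$ hypothesis (which forces $\mathrm{Ind}(G)$ to be connected) immediately gives Cohen--Macaulayness in the first case, and in the triangle case connectedness forces $i=1$, $n=3$. You instead prove the contrapositive of $(i)\Rightarrow(iii)$: if $t=\gcd(i,n)>1$ then some link of dimension at least $1$ is disconnected. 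That works, and in fact $F=\emptyset$ suffices in every case: any two vertices lying in a common face of positive dimension must be at circular distance exactly $i$, hence differ by $\pm i \bmod n$ and lie in the same residue class mod $t$; so $\Delta=\mathrm{Ind}(G)$ splits into $t$ vertex-disjoint subcomplexes, each containing the $1$-face $\{v,v+i\}$, and is therefore a disconnected complex of dimension at least $1$ whenever $t>1$ --- no separate treatment of $n/t=2,3$ is actually needed. One caveat: your intermediate claim that the maximal independent sets are the vertex sets of the components of $C_n(i)$ is false except in degenerate cases; three vertices pairwise at circular distance $i$ exist only when $3i\equiv 0\pmod{n}$, so for $i\neq n/3$ the facets are merely the edges of $C_n(i)$, not whole cycles. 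Since your disconnectedness argument uses only that no face meets two residue classes, this slip is not load-bearing, but the facet description should be corrected. Both routes rest on the same connectivity observation; the paper's is slightly leaner because it never needs the component count of $C_n(i)$, while yours makes the failure of $S_2$ for $\gcd(i,n)>1$ explicit.
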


\begin{proof} The equivalent of (ii) and (iii) is \cite[Theorem 4.2]{EVV}. Also (ii) $\Longrightarrow (i)$ always hold. Now we show that (i) $\Longrightarrow (ii)$. By \cite[Theorem 6.4]{M} one has $\text{dimInd}(G)=1$ except if $i=\frac{n}{3}$, in which case, $\text{dimInd}(G)=2$. If $i\neq\frac{n}{3}$, then by the proof of \cite[Theorem 4.2]{EVV} $\text{Ind}(G)$ is connected and hence it is Cohen-Macaulay. Now assume $i=\frac{n}{3}$. Again by the proof of \cite[Theorem 4.2]{EVV} we have $$\text{Ind}(G)=\langle\{0,i,2i\},\{1,i+1,2i+1\},\ldots,\{i-1,2i-1,3i-1\}\rangle.$$
Since $G$ is $S_2$, $\text{Ind}(G)$ is connected. This yields that $i=1, n=3$, and $\text{Ind}(G)=\langle\{0,i,2i\}\rangle$, i.e., $G$ is Cohen-Macaulay.
\end{proof}

\begin{rem} {\rm While $S_2$ property for the family $C_n(d + 1, d + 2,\ldots ,\lfloor\frac{n}{2}\rfloor)$ is equivalent to Buchsbaumness (and hence there exist circulants in this family that are $S_2$ but not Cohen-Macaulay (see Theorem \ref{THM4})), but $S_2$ graphs of the form $C_n(1,\ldots,\hat{i},\ldots,\lfloor\frac{n}{2}\rfloor)$ are those which are Cohen-Macaulay.}
\end{rem}

\begin{exam} {\rm It follows form \cite[Theorem 4.2]{EVV} that circulant graphs of the form $C_n(1,\ldots,\hat{i},\ldots,\lfloor\frac{n}{2}\rfloor)$ for which ${\rm gcd}(i,n)\neq 1$, is an infinite family of Buchsbaum graphs that are not $S_2$.}
\end{exam}

\section{One-Paired Circulants}

In this section we consider $S_2$ property of one-paired circulant graphs introduced by Boros, Gurvich, and Milani$\check{\text{c}}$ \cite{BGM} as a subfamily of CIS circulant graphs. one-paired circulant graphs define as follows.

\begin{defn} {\rm Let $(a,b)$ a pair of positive integers such that $ab|n$ and let $S=\{d\in\{1,\ldots,\lfloor\frac{n}{2}\rfloor\}~ ~ : ~ ~ a|d ~ \text{and} ~ ab\nmid d\}$ . The circulant graph $C_n(S)$ is called {\it one-paired} and will be denoted by $C(n;a,b)$}
\end{defn}

Earl, Vander Meulen, and Van Tuyl characterized the structure of one-paired circulant graphs, and using that, they identified when a one-paired circulant graph is Cohen-Macaulay or Buchsbaum. More precisely, they showed the followings.

\begin{thm} \label{THM}
{\rm (\cite[Theorem 5.4]{EVV})}
Let $G=C(n;a,b)$ be a one-paired circulant. Then
$$G=\bigcup_{i=1}^a\left(\bigvee_{j=1}^b\overline{K_{\frac{n}{ab}}}\right).$$
\end{thm}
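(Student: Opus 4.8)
The final statement to prove is Theorem~\ref{THM}: for a one-paired circulant $G=C(n;a,b)$, we have the structural decomposition $G=\bigcup_{i=1}^a\left(\bigvee_{j=1}^b\overline{K_{\frac{n}{ab}}}\right)$. The plan is to unwind the definition of the generating set $S=\{d: a\mid d,\ ab\nmid d\}$ and read off directly which pairs of vertices in $\{0,1,\dots,n-1\}$ are adjacent, then group the vertices according to their residues to identify the pieces of the graph.

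First I would partition the vertex set $V=\{0,\dots,n-1\}$ by residue mod $a$: let $V_r=\{v\in V: v\equiv r\pmod a\}$ for $r=0,\dots,a-1$. The key observation is that $\{u,v\}\in E(G)$ iff $|u-v|$ or $n-|u-v|$ lies in $S$; since membership in $S$ requires divisibility by $a$, and since $a\mid n$, an edge can only occur when $u\equiv v\pmod a$. Hence $G$ decomposes as a disjoint union $G=\bigcup_{r=0}^{a-1}G[V_r]$ of its $a$ induced subgraphs on these residue classes, which accounts for the outer union $\bigcup_{i=1}^a$. Next, within a fixed class $V_r$, I would re-index $V_r\cong\{0,1,\dots,\frac{n}{a}-1\}$ via $v\mapsto (v-r)/a$, and observe that under this identification the adjacency condition becomes exactly the circulant structure $C_{n/a}(S')$ where $S'=\{d'\in\{1,\dots,\lfloor\frac{n}{2a}\rfloor\}: b\nmid d'\}$ — i.e., two rescaled vertices are adjacent precisely when they are \emph{not} congruent mod $b$. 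This shows $G[V_r]\cong C_{n/a}(\{d': b\nmid d'\})$, which is the complete multipartite graph on $b$ parts each of size $\frac{n}{ab}$: indeed, partitioning $\{0,\dots,\frac{n}{a}-1\}$ by residue mod $b$ gives $b$ classes, each an independent set (differences divisible by $b$ are non-edges), and any two vertices in different classes are adjacent. That complete multipartite graph on $b$ parts of size $\frac{n}{ab}$ is exactly the join $\bigvee_{j=1}^b\overline{K_{n/ab}}$.

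The main obstacle — really the only subtle point — is handling the ``wrap-around'' correctly: the circulant edge condition uses both $|u-v|\in S$ and $n-|u-v|\in S$, and after rescaling one must check that $n-|u-v|$ being divisible by $a$ but not $ab$ translates, via the division by $a$, to the correct non-divisibility-by-$b$ condition on $\frac{n}{a}-d'$, using $a\mid n$ and $ab\mid n$. One must also be careful about the boundary index $\lfloor n/2\rfloor$ versus $\lfloor n/(2a)\rfloor$ and the case $\frac{n}{ab}=$ small, but these are routine once the residue-class bookkeeping is set up. I would organize the write-up as two lemmas (edges respect residues mod $a$; each residue class is complete multipartite with $b$ parts), and then simply assemble them; no deep tool is needed beyond careful congruence arithmetic.
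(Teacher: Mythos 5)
Your argument is correct: the two observations (edges only join vertices in the same residue class mod $a$ because $a\mid n$ and every element of $S$ is divisible by $a$; and within a class, after dividing by $a$, adjacency is exactly ``difference not divisible by $b$'' since $ab\mid n$ makes the wrap-around term $n/a-d'$ divisible by $b$ iff $d'$ is) do yield the disjoint union of $a$ copies of the complete $b$-partite graph with parts of size $\frac{n}{ab}$, which is $\bigvee_{j=1}^b\overline{K_{n/ab}}$. Note that the paper itself offers no proof of this statement -- it is quoted verbatim from \cite[Theorem 5.4]{EVV} -- so there is nothing to compare against; your residue-class decomposition is the natural self-contained argument and fills that gap correctly.
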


\begin{cor} \label{COR}
{\rm (\cite[Corollary 5.10]{EVV})}
Let $G$ be the one-paired circulant graph $G=C(n;a,b)$. Then
\begin{itemize}
\item[(i)] $G$ is vertex decomposable/shellable/Cohen-Macaulay if and only if $n=ab$.
\item[(ii)] $G$ is Buchsbaum but not Cohen-Macaulay if and only if $a=1$ and $ab<n$.
\item[(iii)] $\text{Ind}(G)$ is pure but not Buchsbaum if and only if $1<a$ and $ab<n$.
\end{itemize}
\end{cor}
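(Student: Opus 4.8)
The plan is to read everything off the structure theorem (Theorem \ref{THM}). Put $m=n/(ab)$, so that $G$ is the disjoint union of $a$ copies of the complete $b$-partite graph $H:=\bigvee_{j=1}^{b}\overline{K_m}$ whose $b$ parts all have size $m$. Since the independence complex of a disjoint union of graphs is the simplicial join of their independence complexes, since the independence complex of a join $G_1\vee G_2$ is the union $\text{Ind}(G_1)\cup\text{Ind}(G_2)$ (an independent set of a graph join lies wholly in one of the two sides), and since $\text{Ind}(\overline{K_m})$ is the full $(m-1)$-simplex, I would first record that $\text{Ind}(H)=\Lambda$, where $\Lambda=\Lambda_{b,m}$ is the pure $(m-1)$-dimensional complex consisting of $b$ simplices on pairwise disjoint vertex sets glued only along $\emptyset$ (these being the $b$ parts of $H$), and hence that $\text{Ind}(G)=\Lambda^{*a}$, the $a$-fold simplicial join of $\Lambda$ with itself.

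Next I would establish the elementary properties of $\Lambda$. It is pure of dimension $m-1$. If $m=1$ it is a set of $b$ isolated points, and if $b=1$ it is a single simplex; in either case it is vertex decomposable, hence shellable, hence Cohen--Macaulay over every field. If instead $m\geq 2$ and $b\geq 2$, then $\Lambda$ is disconnected of dimension $\geq 1$, so $\widetilde H_0(\Lambda;k)\neq 0$ and $\Lambda$ fails Reisner's criterion at $F=\emptyset$; thus $\Lambda$ is not Cohen--Macaulay over any field, nor even $S_2$. It is, however, Buchsbaum: a disjoint union of equidimensional Cohen--Macaulay complexes is Buchsbaum, equivalently $\Lambda$ is pure and $\text{link}_{\Lambda}(v)$ is a simplex (hence Cohen--Macaulay) for every vertex $v$. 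I would then propagate these properties through the join using the standard facts that, for complexes on disjoint vertex sets, $\text{link}_{\Delta_1*\Delta_2}(F_1\cup F_2)=\text{link}_{\Delta_1}(F_1)*\text{link}_{\Delta_2}(F_2)$; that $\Delta_1*\Delta_2$ is vertex decomposable (resp.\ shellable) iff both $\Delta_1$ and $\Delta_2$ are; and that the reduced homology of a join is, up to a degree shift, the tensor product of the reduced homologies of its factors, so $\Delta_1*\Delta_2$ is Cohen--Macaulay over $k$ iff both factors are.

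Now I would assemble the three statements. Inducting on $a$ with $\Delta_1=\Lambda$ and $\Delta_2=\Lambda^{*(a-1)}$, it follows that $\text{Ind}(G)=\Lambda^{*a}$ is vertex decomposable / shellable / Cohen--Macaulay precisely when $\Lambda$ is, i.e.\ (under the standing convention $b\geq 2$ for a one-paired circulant) precisely when $m=1$, that is $n=ab$; this is (i). For (ii) and (iii), note first that $\text{Ind}(G)=\Lambda^{*a}$ is always pure, being a join of pure complexes, and that by (i) it is Cohen--Macaulay iff $m=1$, so ``not Cohen--Macaulay'' means $m\geq 2$. If $a=1$ then $\text{Ind}(G)=\Lambda$, which is Buchsbaum for all $m$, giving (ii): $\text{Ind}(G)$ is Buchsbaum but not Cohen--Macaulay iff $a=1$ and $m\geq 2$, i.e.\ $a=1$ and $ab<n$ (recall $ab\mid n$). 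If $a\geq 2$ and $m\geq 2$, pick a vertex $v$ of $\text{Ind}(G)$ lying in one join factor; then $\text{link}_{\text{Ind}(G)}(v)=\text{link}_{\Lambda}(v)*\Lambda^{*(a-1)}$, which is not Cohen--Macaulay since it is a join involving the non--Cohen--Macaulay complex $\Lambda$, so by the criterion that a Buchsbaum complex has all vertex links Cohen--Macaulay, $\text{Ind}(G)$ is not Buchsbaum. Hence $\text{Ind}(G)$ is pure but not Buchsbaum exactly when $a\geq 2$ and $m\geq 2$, i.e.\ $1<a$ and $ab<n$, which is (iii); and when $a\geq 2$ and $m=1$ we are back in the Cohen--Macaulay case $n=ab$, consistently with all three items.

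The main obstacle is not any individual computation but handling the behaviour of the Buchsbaum property under joins: the positive preservation statements (joins preserve vertex decomposability, shellability and Cohen--Macaulayness, and disjoint unions of equidimensional Cohen--Macaulay complexes are Buchsbaum) are standard, but one must exploit that Buchsbaumness is \emph{not} preserved by joins --- it collapses as soon as one factor fails Cohen--Macaulayness and the other contributes a vertex --- which is precisely the phenomenon that separates case (ii), $a=1$, from case (iii), $a\geq 2$. A secondary point to flag is the degenerate case $b=1$, where $S=\emptyset$ and $G$ is edgeless so $\text{Ind}(G)$ is a full simplex; this is excluded by the standing conventions on one-paired circulants and should be stated explicitly.
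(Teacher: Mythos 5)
This corollary is not proved in the paper at all: it is imported verbatim as \cite[Corollary 5.10]{EVV}, and the only ingredient the paper reproduces is the structure theorem (Theorem \ref{THM}). Your argument is therefore a genuine proof where the paper offers only a citation, and it is correct; it starts from exactly the decomposition $G=\bigcup_{i=1}^a\bigl(\bigvee_{j=1}^b\overline{K_{n/(ab)}}\bigr)$ that [EVV] establishes, and all the reductions you invoke are sound: $\text{Ind}$ of a disjoint union is a simplicial join, $\text{Ind}$ of a graph join is the union of the two independence complexes, links in a join are joins of links, a join is vertex decomposable/shellable/Cohen--Macaulay if and only if both factors are, and a pure complex is Buchsbaum if and only if all vertex links are Cohen--Macaulay. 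The mechanism you isolate to separate (ii) from (iii) --- joining the non-Cohen--Macaulay complex $\Lambda$ with any factor possessing a vertex forces some vertex link to contain $\Lambda$ as a join factor, killing Buchsbaumness --- is precisely the content of the paper's Lemma \ref{LEM3} (a disjoint union of two Buchsbaum, non-Cohen--Macaulay graphs is not Buchsbaum), so your proof in effect reproves that lemma as well. Two points should be made explicit in a written version: the standing convention $b\geq 2$ (for $b=1$ the generating set $S$ is empty, $G$ is edgeless, $\text{Ind}(G)$ is a full simplex, and item (i) fails as stated), and a reference for the characterization ``Buchsbaum over $k$ $\iff$ pure with all vertex links Cohen--Macaulay over $k$,'' which you use in both directions and which is the one nontrivial external fact your argument rests on.
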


Using the above results we show that one-paired circulant $S_2$ graphs are Cohen-Macaulay.

\begin{thm} Let $G$ be the one-paired circulant graph $G=C(n;a,b)$. Then $G$ is $S_2$ if and only if $n=ab$, i.e., $G$ is Cohen-Macaulay.
\end{thm}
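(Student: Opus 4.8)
The plan is to prove the nontrivial direction: assuming $G = C(n;a,b)$ is $S_2$, show $n = ab$; the reverse implication is immediate since Cohen-Macaulay $\Rightarrow S_2$. The starting point is the structural description from Theorem \ref{THM}, namely $G = \bigcup_{i=1}^a\left(\bigvee_{j=1}^b \overline{K_{n/ab}}\right)$. Since $S_2$ implies well-covered, and by Corollary \ref{COR} the only possibilities consistent with purity but without Cohen-Macaulayness are (ii) $a = 1$, $ab < n$ and (iii) $1 < a$, $ab < n$, it suffices to rule out each of these two cases by exhibiting a face $F \in \mathrm{Ind}(G)$ with $\dim \mathrm{link}_{\mathrm{Ind}(G)}(F) \geq 1$ whose link is disconnected, contradicting the Corollary in the introduction.

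First I would analyze the independence complex of a single connected component $H = \bigvee_{j=1}^b \overline{K_{m}}$, where $m = n/ab > 1$ (the failure case). An independent set of a join $H_1 \vee H_2$ must lie entirely in one of the two sides, so an independent set of $\bigvee_{j=1}^b \overline{K_m}$ is exactly an independent set of one of the $b$ copies of $\overline{K_m}$; since $\overline{K_m}$ has no edges, $\mathrm{Ind}(\overline{K_m})$ is the full simplex on $m$ vertices. Hence $\mathrm{Ind}(H)$ is the disjoint union (as a simplicial complex, i.e. wedge-free union sharing only $\emptyset$) of $b$ simplices each of dimension $m-1$. This complex is \emph{not} $S_2$ as soon as $m \geq 2$: indeed $\mathrm{link}_{\mathrm{Ind}(H)}(\emptyset) = \mathrm{Ind}(H)$ has dimension $m - 1 \geq 1$ but is disconnected when $b \geq 2$ — wait, $b \geq 1$ suffices to worry about, but if $b = 1$ then $\mathrm{Ind}(H)$ is a single simplex and is Cohen-Macaulay. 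So the obstruction within one component needs $b \geq 2$; if $b = 1$ then $G = \bigcup_{i=1}^a \overline{K_m}$, a disjoint union of $a$ edgeless pieces, and here I would instead use disconnectedness of $\mathrm{Ind}(G)$ across components.

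Now for the global complex: $G$ is a disjoint union of $a$ copies of $H$, so $\mathrm{Ind}(G)$ is the \emph{join} $\mathrm{Ind}(H)^{*a}$ of $a$ copies of $\mathrm{Ind}(H)$ (independent sets in a disjoint union are arbitrary unions of independent sets from each piece, which is exactly the simplicial join). A join $\Delta_1 * \cdots * \Delta_a$ is $S_2$ if and only if... here is where I expect the main work: I would pick a face $F$ that is a facet on all but one of the $a$ factors, so that $\mathrm{link}_{\mathrm{Ind}(G)}(F)$ restricts to $\mathrm{Ind}(H)$ on the remaining factor (times nothing), which is disconnected (when $b \geq 2$) and of dimension $m - 1 \geq 1$. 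Concretely, if $a \geq 1$, fix a full simplex facet in each of the last $a - 1$ factors and take their union $F$; then $\mathrm{link}(F) = \mathrm{Ind}(H)$ on the first factor, disconnected, dimension $\geq 1$ — contradiction with the $S_2$ Corollary. For the remaining case $b = 1$, $m > 1$: if $a \geq 2$ then $\mathrm{Ind}(G)$ is a join of $a \geq 2$ copies of a simplex of dimension $m - 1$, which is actually a simplex itself, hence Cohen-Macaulay — so this case cannot produce $ab < n$ with $G$ merely $S_2$ and not Cohen-Macaulay unless I reexamine; actually when $b = 1$ we have $S = \emptyset$ in the definition forcing $G$ edgeless on $n$ vertices, $\mathrm{Ind}(G)$ a full simplex, which is Cohen-Macaulay — so $b = 1$ already gives Cohen-Macaulayness and is not a counterexample to handle. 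Therefore the only genuine failure regime is $b \geq 2$ and $m = n/ab > 1$, i.e. $ab < n$, and the face $F$ constructed above shows $\mathrm{Ind}(G)$ is not $S_2$ there.

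The main obstacle is getting the join/link bookkeeping exactly right — precisely identifying which faces $F$ make $\dim \mathrm{link}(F) \geq 1$ while keeping the link disconnected, and double-checking that "disjoint union of graphs $\leftrightarrow$ join of independence complexes" is applied correctly (it reverses the naive intuition). Once that is set up, disconnectedness is transparent from the description of $\mathrm{Ind}(H)$ as a disjoint union of $b$ simplices. I would then conclude: if $ab < n$ then $G$ is not $S_2$; combined with $S_2 \Rightarrow$ well-covered and Corollary \ref{COR}, being $S_2$ forces $n = ab$, and by Corollary \ref{COR}(i) this is equivalent to $G$ being Cohen-Macaulay, completing the proof.
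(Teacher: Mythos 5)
Your proposal is correct and follows essentially the same route as the paper: both use the decomposition $\mathrm{Ind}(G)=\mathrm{Ind}(G_1)\ast\cdots\ast\mathrm{Ind}(G_a)$ with each factor a disjoint union of $b$ simplices on $k=n/ab$ vertices, and both take $F$ to be a union of facets from $a-1$ of the factors so that $\mathrm{link}(F)$ is a single factor, which is disconnected of dimension $k-1\geq 1$ when $k>1$. Your extra discussion of the degenerate case $b=1$ is a harmless addition the paper omits (it implicitly assumes $b>1$).
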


\begin{proof} By Theorem \ref{THM} one has
$$G=\bigcup_{i=1}^a\left(\bigvee_{j=1}^b\overline{K_{\frac{n}{ab}}}\right).$$
It concludes that
$$\text{Ind}(G)=\text{Ind}(G_1)\ast\text{Ind}(G_2)\ast\cdots\ast\text{Ind}(G_a),$$
where $\ast$ denotes the join of complexes, and $G_i=\bigvee_{j=1}^b\overline{K_{\frac{n}{ab}}}$ for all $i=1,\ldots,a$. Therefore each $\text{Ind}(G_i)$ is a pure complex consists of disjoint union of $b$ simplices with cardinality $k=\frac{n}{ab}$. Thus we may assume
$$\text{Ind}(G)=\langle F_{11},\ldots,F_{1b}\rangle\ast\langle F_{21},\ldots,F_{2b}\rangle\ast\cdots\ast\langle F_{a1},\ldots,F_{ab}\rangle,$$
where $|F_{ij}|=k$ and $F_{ij}\cap F_{kl}=\emptyset$ for all $1\leq i,k\leq a$ and $1\leq j,l\leq b$.\\
If $k>1$, then $F=F_{11}\cup F_{21}\cup\ldots\cup F_{(a-1)1}$ is a face of $\text{Ind}(G)$ whose link is $\rm{link}_\Delta(F)=\text{Ind}(G_a)=\langle F_{a1},\ldots,F_{ab}\rangle.$ This yields that $\rm{dim}\rm{link}_\Delta(F)=k-1>0$ and $\rm{link}_\Delta(F)$ is disconnected since $b>1$. Thus, if $G$ is $S_2$, then $k=1$, i.e., $n=ab$. Corollary \ref{COR}.(i) now completes the proof.
\end{proof}

As an immediate consequence, we get the following which is the $S_2$ analogue of \cite[Theorem 5.9]{EVV}.

\begin{cor} \label{COR1}
Let $G=C(n;1,b)=C(mb;1,b)$ be a one-paired circulant graph. Then $G$ is $S_2$ if and only if $m=1$, i.e., $G$ is Cohen-Macaulay.
\end{cor}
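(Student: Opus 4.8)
The statement to prove is Corollary \ref{COR1}: for the one-paired circulant $G=C(mb;1,b)$, we have $G$ is $S_2$ if and only if $m=1$, equivalently $G$ is Cohen-Macaulay.

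The plan is to obtain this as a direct specialization of the preceding theorem. In the preceding theorem we have shown that the one-paired circulant $C(n;a,b)$ is $S_2$ if and only if $n=ab$, in which case it is Cohen-Macaulay by Corollary \ref{COR}(i). So the first step is simply to set $a=1$ and write $n=mb$, observing that the divisibility hypothesis $ab\mid n$ in the definition of a one-paired circulant becomes $b\mid mb$, which holds automatically, so $G=C(mb;1,b)$ is a legitimate one-paired circulant for every $m\geq 1$. The second step is to read off the conclusion: the condition $n=ab$ becomes $mb=b$, i.e. $m=1$.

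Then I would invoke the equivalences already recorded. By the preceding theorem, $G$ is $S_2$ $\iff$ $n=ab$ $\iff$ $m=1$. And when $m=1$ we have $n=ab$, so Corollary \ref{COR}(i) gives that $G$ is (vertex decomposable, shellable, and in particular) Cohen-Macaulay; conversely Cohen-Macaulay implies $S_2$ in general. Hence all three statements — $G$ is $S_2$, $m=1$, $G$ is Cohen-Macaulay — are equivalent.

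There is really no obstacle here; the only point requiring a word of care is the bookkeeping with the parameter names, namely that the ambient cyclic order $n$ of the circulant is $mb$ and that $C(n;1,b)$ and $C(mb;1,b)$ denote the same graph, which is just the identification $n=mb$ in the definition. With that identification the corollary is the $a=1$ case of the theorem together with Corollary \ref{COR}(i). Here is the write-up.

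\medskip
\noindent\emph{Proof.} Setting $a=1$ in the definition of a one-paired circulant, the divisibility requirement $ab\mid n$ reads $b\mid n$, so writing $n=mb$ we see that $C(n;1,b)=C(mb;1,b)$ is a one-paired circulant for every $m\geq 1$. By the previous Theorem, $G=C(mb;1,b)$ is $S_2$ if and only if $n=ab$, that is, if and only if $mb=b$, i.e. $m=1$. Moreover, when $m=1$ we have $n=ab$, and Corollary \ref{COR}(i) shows that in this case $G$ is (vertex decomposable, shellable, and in particular) Cohen-Macaulay; conversely every Cohen-Macaulay graph is $S_2$. Hence $G$ is $S_2$ $\iff$ $m=1$ $\iff$ $G$ is Cohen-Macaulay. $\qed$
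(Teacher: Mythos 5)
Your proof is correct and is exactly the specialization the paper intends: the corollary is stated as an immediate consequence of the preceding theorem (with $a=1$, $n=mb$, so $n=ab$ becomes $m=1$), combined with Corollary \ref{COR}(i). Your write-up just makes that bookkeeping explicit.
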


\begin{exam} {\rm It follows from Corollaries \ref{COR1} and \ref{COR}.(ii) that, if $m>1$, then $C(mb;1,b)$ is Buchsbaum but not $S_2$.}
\end{exam}

\section{circulant cubic graphs}

A graph in which every vertex has degree $3$, is called a cubic graph. It is easy to see that a circulant cubic graph is of the form $C_{2n}(a,n)$ with $1\leq a <n$. Brown and Hoshino \cite[Theorem 4.3]{BH2} characterized which connected circulant cubic graphs are well-covered. They showed that a connected circulant cubic graph $G$ is well-covered if and only if it is isomorphic to one of the following graphs: $C_4(1,2)$, $C_6(1,3)$, $C_6(2,3)$, $C_8(1,4)$, or $C_{10}(2,5)$ (see, \cite[Theorem 4.3]{BH2}).

On the other hand, Vander Meulen {\it et al.} in \cite[Theorem 5.2]{VVW} proved that a connected circulant cubic graph $G$ is Cohen-Macaulay if and only if it is isomorphic to $C_4(1,2)$ or $C_6(2,3)$. In the next Proposition we examine which connected circulant cubic graphs are $S_2$.

\begin{prop} \label{PROP1} The only non-$S_2$ connected circulant cubic graph is $C_6(1,3)$.
\end{prop}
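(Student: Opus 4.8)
The plan is to use the classification from Brown--Hoshino of well-covered connected circulant cubic graphs. Since every $S_2$ graph is well-covered, any $S_2$ connected circulant cubic graph must be isomorphic to one of $C_4(1,2)$, $C_6(1,3)$, $C_6(2,3)$, $C_8(1,4)$, or $C_{10}(2,5)$. Hence it suffices to check the five graphs on this finite list one at a time, and for each either exhibit an $S_2$ certificate (via the Corollary of Terai's theorem: every link of positive dimension is connected) or a witness to failure. First I would dispose of the two that are already known to be Cohen-Macaulay: $C_4(1,2)$ and $C_6(2,3)$ are Cohen-Macaulay by \cite[Theorem 5.2]{VVW}, hence $S_2$.

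Next I would handle $C_6(1,3)$, which I expect to be the exceptional case. Here $V=\{0,1,2,3,4,5\}$ with edges given by differences $1$ and $3$; the independence complex has as its maximal independent sets the $2$-element sets $\{0,2\},\{1,3\},\{2,4\},\{3,5\},\{4,0\},\{5,1\}$ (the antipodal pairs are edges, and no independent set has size $3$ since the graph is the prism $K_3\times K_2$, equivalently $C_6(1,3)\cong K_{3,3}$ minus a perfect matching --- I would pin down the precise isomorphism type by a short computation). So $\operatorname{Ind}(C_6(1,3))$ is a $1$-dimensional complex, in fact a $6$-cycle (hexagon) as an abstract graph, which is connected but I must check $\emptyset$'s link: $\operatorname{link}(\emptyset)=\operatorname{Ind}(G)$ has dimension $1\geq 1$ and is connected, so that test passes --- therefore I need to look more carefully. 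The correct point is that $\operatorname{Ind}(C_6(1,3))$ is NOT pure or, more likely, that it IS pure (a hexagon) but fails $S_2$ for a subtler reason; in fact a $1$-dimensional pure complex is $S_2$ iff it is connected, so a connected hexagon would be $S_2$. So I suspect the independence complex is actually $\langle\{0,3\},\{1,4\},\{2,5\}\rangle$, three disjoint edges (if $C_6(1,3)$ has the non-antipodal pairs as edges) --- I would recompute the edge set carefully: differences $1$ give $\{0,1\},\{1,2\},\dots$ and $n-|i-j|=3$ means $|i-j|=3$ gives $\{0,3\},\{1,4\},\{2,5\}$, so the complement graph (non-edges) is exactly the three antipodal-after-removing... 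I would carefully list non-edges and conclude $\operatorname{Ind}(C_6(1,3))$ is three disjoint $1$-simplices, hence disconnected, hence not $S_2$ (indeed this matches the "not Cohen-Macaulay, not $S_2$" expectation). The computation of this edge set is the one place I must be meticulous.

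Finally I would check $C_8(1,4)$ and $C_{10}(2,5)$ are $S_2$. For each, I would list the facets of $\operatorname{Ind}(G)$ explicitly (both are well-covered so the complex is pure; for $C_8(1,4)$ the independent sets avoid consecutive vertices and antipodes, and likewise for $C_{10}(2,5)$ after identifying the structure), verify the whole complex is connected, and then verify that for every vertex $v$ (by the vertex-transitivity of circulant graphs it suffices to check one vertex) and indeed every nonempty face $F$ with $\dim\operatorname{link}(F)\geq 1$, the link is connected. By vertex-transitivity and the small size, this reduces to a handful of explicit link computations. The main obstacle is simply the bookkeeping in the $C_6(1,3)$ edge-set computation (to correctly identify it as three disjoint edges and hence disconnected independence complex) together with the finite but slightly tedious verification of connectivity of all links for $C_8(1,4)$ and $C_{10}(2,5)$; since these graphs have only $8$ and $10$ vertices respectively, this is entirely routine once the facet lists are written down, and vertex-transitivity cuts the work to a single representative face in each dimension.
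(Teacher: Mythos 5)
Your overall strategy is exactly the paper's: invoke the Brown--Hoshino classification of connected well-covered circulant cubic graphs to reduce to the five graphs $C_4(1,2)$, $C_6(1,3)$, $C_6(2,3)$, $C_8(1,4)$, $C_{10}(2,5)$, dispose of the two Cohen--Macaulay ones, verify $S_2$ for $C_8(1,4)$ and $C_{10}(2,5)$ by explicit link computations (using vertex-transitivity), and exhibit disconnectedness for $C_6(1,3)$. That plan is sound, and your remark that one must check connectivity of $\operatorname{link}(F)$ for \emph{every} face with $\dim\operatorname{link}(F)\geq 1$ (so for $C_{10}(2,5)$ also the two-element faces, not just vertices) is if anything slightly more careful than the paper's write-up.

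The one genuine error is your identification of $C_6(1,3)$ and its independence complex, and none of the three candidate descriptions you float is correct. In $C_6(1,3)$ the adjacency condition is $|i-j|\in\{1,3,5\}$, i.e.\ $i$ and $j$ are adjacent exactly when they have opposite parity; so $C_6(1,3)\cong K_{3,3}$ with parts $\{0,2,4\}$ and $\{1,3,5\}$ (it is not the prism $K_3\times K_2$ --- that is $C_6(2,3)$ --- nor $K_{3,3}$ minus a perfect matching, which is just $C_6$). Consequently $\operatorname{Ind}(C_6(1,3))=\langle\{0,2,4\},\{1,3,5\}\rangle$ is two disjoint $2$-simplices: a disconnected \emph{two}-dimensional complex, as the paper states. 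Your final guess of ``three disjoint $1$-simplices $\{0,3\},\{1,4\},\{2,5\}$'' cannot be right, since each of those pairs has difference $3$ and is therefore an edge of the graph, hence not a face of the independence complex. The conclusion you draw --- the complex is disconnected of positive dimension, so $\operatorname{link}(\emptyset)$ violates the $S_2$ criterion --- survives the correction, but as written the computation you flag as the place requiring care is the place where the argument is wrong and would need to be redone.
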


\begin{proof} We know that $C_4(1,2)$ and $C_6(2,3)$ are Cohen-Macaulay. If $G=C_8(1,4)$, then $\text{Ind}(G)=\langle 025,035,036,136,147,247,257\rangle$. Thus $\text{Ind}(G)$ and $\text{link}_{\text{Ind}(G)}(0)$ are connected, i.e., $G$ is $S_2$. If $G=C_{10}(2,5)$, then
$$\text{Ind}(G)=\langle 0147,0347,0367,0369,1258,1458,1478,2369,2569,2589\rangle.$$
Again $\text{Ind}(G)$ and $\text{link}_{\text{Ind}(G)}(0)$ are connected. One can easily check that $\text{link}_{\text{Ind}(G)}(F)$ is connected for each $F\in\rm{Ind}(G)$ with $|F|=1$. Therefore $C_{10}(2,5)$ is also $S_2$. Finally, note that the independence complex of $C_6(1,3)$ is a disconnected two dimensional simplicial complex, and so it is not $S_2$.
\end{proof}

\begin{exam} {\rm Earl, Vander Meulen, and Van Tuyl showed that all connected circulant cubic graphs are Buchsbaum (see \cite[Table 1]{EVV}). By Proposition \ref{PROP1}, $C_6(1,3)$ is the only connected circulant cubic Buchsbaum graph which is not $S_2$.}
\end{exam}

We need some preliminaries to generalize Proposition \ref{PROP1} to all circulant graphs.

\begin{prop} \label{PROP2}
Let $\Delta_1$ and $\Delta_2$ be simplicial complexes with disjoint vertex sets. Then $\Delta=\Delta_1\ast\Delta_2$ is $S_2$ if and only if $\Delta_1$ and $\Delta_2$ are $S_2$.
\end{prop}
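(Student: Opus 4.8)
The plan is to use the combinatorial characterization of $S_2$ given in the Corollary after Terai's theorem: a simplicial complex is $S_2$ if and only if $\operatorname{link}(F)$ is connected for every face $F$ with $\dim\operatorname{link}(F)\geq 1$. So the whole proof reduces to understanding links of faces in a join. The key structural fact I would use is that for a join $\Delta=\Delta_1\ast\Delta_2$ with disjoint vertex sets, every face $F\in\Delta$ splits uniquely as $F=F_1\cup F_2$ with $F_1\in\Delta_1$ and $F_2\in\Delta_2$, and moreover
\[
\operatorname{link}_\Delta(F)=\operatorname{link}_{\Delta_1}(F_1)\ast\operatorname{link}_{\Delta_2}(F_2).
\]
This is elementary from the definitions of join and link. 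Also $\dim\Delta=\dim\Delta_1+\dim\Delta_2+1$, and more generally $\dim\operatorname{link}_\Delta(F)=\dim\operatorname{link}_{\Delta_1}(F_1)+\dim\operatorname{link}_{\Delta_2}(F_2)+1$ (with the usual convention $\dim\emptyset=-1$ for the void/irrelevant complex if some link is empty — though here purity, guaranteed once $S_2$ holds, lets us avoid degenerate cases).

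First I would record the two lemmas just stated about faces, links, and dimensions of joins. Then for the forward direction ($\Delta$ is $S_2 \Rightarrow \Delta_1,\Delta_2$ are $S_2$): given a face $F_1\in\Delta_1$ with $\dim\operatorname{link}_{\Delta_1}(F_1)\geq 1$, I pick any facet-completing face $F_2\in\Delta_2$ (e.g. extend $\emptyset$ to a facet of $\Delta_2$, or just take $F_2$ to be any face of $\Delta_2$ — taking $F_2=\emptyset$ already works once we know $\Delta_2$ is pure/nonempty) so that $F=F_1\cup F_2\in\Delta$ and $\dim\operatorname{link}_\Delta(F)=\dim\operatorname{link}_{\Delta_1}(F_1)+\dim\operatorname{link}_{\Delta_2}(F_2)+1\geq 1$. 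Since $\Delta$ is $S_2$, $\operatorname{link}_\Delta(F)=\operatorname{link}_{\Delta_1}(F_1)\ast\operatorname{link}_{\Delta_2}(F_2)$ is connected. A join $A\ast B$ of two nonempty complexes is always connected, but if $A\ast B$ is connected that does not immediately force $A$ connected — however, here I use the extra input $\dim\operatorname{link}_{\Delta_1}(F_1)\geq 1$: take $F_2$ to be a facet of $\Delta_2$ so $\operatorname{link}_{\Delta_2}(F_2)=\{\emptyset\}$, whence $\operatorname{link}_\Delta(F)=\operatorname{link}_{\Delta_1}(F_1)$ and connectivity transfers directly. This is the cleanest route and sidesteps any subtlety. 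Then $\Delta_1$ is $S_2$, and symmetrically $\Delta_2$.

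For the converse ($\Delta_1,\Delta_2$ both $S_2 \Rightarrow \Delta$ is $S_2$): take any $F\in\Delta$ with $\dim\operatorname{link}_\Delta(F)\geq 1$, write $F=F_1\cup F_2$, so $\operatorname{link}_\Delta(F)=\operatorname{link}_{\Delta_1}(F_1)\ast\operatorname{link}_{\Delta_2}(F_2)$ and $\dim\operatorname{link}_{\Delta_1}(F_1)+\dim\operatorname{link}_{\Delta_2}(F_2)\geq 0$. If both links are nonempty of dimension $\geq 0$, their join is connected (a join of two nonempty complexes is connected as soon as each has at least one vertex — and if one has dimension $\geq 1$ the relevant $S_2$ hypothesis gives connectivity of that factor too, while any nonempty factor joined to a connected factor stays connected). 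The only borderline case is when one factor, say $\operatorname{link}_{\Delta_2}(F_2)$, has dimension $-1$ or $0$; if it is $\{\emptyset\}$ then $\operatorname{link}_\Delta(F)=\operatorname{link}_{\Delta_1}(F_1)$ which has dimension $\geq 1$, hence connected because $\Delta_1$ is $S_2$; if it has dimension $0$ (two or more isolated vertices) but $\operatorname{link}_{\Delta_1}(F_1)$ is nonempty, the join is again connected since joining with anything nonempty connects everything. So in every case $\operatorname{link}_\Delta(F)$ is connected, and $\Delta$ is $S_2$.

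The main obstacle is purely bookkeeping around degenerate/low-dimensional links: making sure the dimension formula for links of joins is applied with the right conventions, and carefully treating the cases where one of the two link-factors is a point or a zero-dimensional complex, so that the connectivity claim for $A\ast B$ is invoked correctly. Once the join/link identity and the dimension formula are nailed down, both implications are short. I would also remark that, as with the earlier Corollary, none of this depends on the field, consistent with the characteristic-independence of $S_2$.
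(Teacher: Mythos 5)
Your proposal is correct and follows essentially the same route as the paper's own proof: both rest on the identity $\operatorname{link}_{\Delta}(F_1\cup F_2)=\operatorname{link}_{\Delta_1}(F_1)\ast\operatorname{link}_{\Delta_2}(F_2)$, prove the forward direction by joining $F_1$ with a facet of $\Delta_2$ so that the second link collapses to $\{\emptyset\}$, and prove the converse by the same case analysis on which link factors are empty, using that a join of two nonempty complexes with vertices is connected.
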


\begin{proof}  First note that if $\Delta_1$ and $\Delta_2$ are nonempty simplicial complexes, then $\Delta_1\ast\Delta_2$ is connected. On the other hand, it is not difficult to check that for $F_1\in\Delta_1$ and $F_2\in\Delta_2$ one has $\text{link}_\Delta(F_1\cup F_2)=\text{link}_{\Delta_1}(F_1)\ast\text{link}_{\Delta_2}(F_2)$.\\
Now assume $\Delta$ is $S_2$, and $F_1\in\Delta_1$ be such that $\text{dimlink}_{\Delta_1}(F_1)>0$. Then for a facet $F_2\in\Delta_2$ one has $F=F_1\cup F_2\in \Delta$ and $\text{link}_\Delta(F)=\text{link}_{\Delta_1}(F_1)\ast\text{link}_{\Delta_2}(F_2)=\text{link}_{\Delta_1}(F_1)\ast\{\emptyset\}=\text{link}_{\Delta_1}(F_1)$. Since $\Delta$ is $S_2$, $\text{link}_\Delta(F)$ is connected, and so is $\text{link}_{\Delta_1}(F_1)$, i.e., $\Delta_1$ is $S_2$. Similarly $\Delta_2$ is $S_2$.\\
Conversely, assume $\Delta_1$ and $\Delta_2$ are $S_2$. Also assume $F\in\Delta$ be such that $\text{dimlink}_{\Delta}(F)>0$. If $F\in\Delta_1$ (similarly for $F\in\Delta_2$), then $\text{link}_\Delta(F)=\text{link}_{\Delta_1}(F)\ast\text{link}_{\Delta_2}(\emptyset)=\text{link}_{\Delta_1}(F)\ast\Delta_2$. Now if $\text{link}_{\Delta_1}(F)\neq\emptyset$, then $\text{link}_{\Delta_1}(F)\ast\Delta_2$ is always connected, and if $\text{link}_{\Delta_1}(F)=\emptyset$, then $\text{link}_{\Delta}(F)=\Delta_2$ which is connected because it is $S_2$ (with positive dimension) and we are done. Now assume $F=F_1\cup F_2$ where  $F_1\in\Delta_1$ and $F_2\in\Delta_2$, and that $F_1\neq\emptyset$ and $F_2\neq\emptyset$. Thus $\text{link}_\Delta(F)=\text{link}_{\Delta_1}(F_1)\ast\text{link}_{\Delta_2}(F_2)$. If $\text{link}_{\Delta_1}(F_1)$ and $\text{link}_{\Delta_2}(F_2)$ are nonempty, then $\text{link}_{\Delta}(F)$ is connected. If $\text{link}_{\Delta_1}(F_1)=\emptyset$, then $\text{link}_\Delta(F)=\text{link}_{\Delta_2}(F_2)$ which is connected because $\Delta_2$ is $S_2$.
\end{proof}

\begin{cor} \label{COR2}
Let $G$ be a simple graph which is a disjoint union of two graphs $H$ and $K$. Then $G$ is $S_2$ if and only if $H$ and $K$ are $S_2$.
\end{cor}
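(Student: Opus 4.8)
The plan is to reduce this immediately to Proposition \ref{PROP2} by identifying the independence complex of a disjoint union as a join of the independence complexes of the components. First I would recall that a subset $F\subseteq V(G)=V(H)\sqcup V(K)$ is independent in $G$ if and only if $F\cap V(H)$ is independent in $H$ and $F\cap V(K)$ is independent in $K$: since $G$ has no edges between $V(H)$ and $V(K)$, there is no extra adjacency constraint to worry about. Hence $\mathrm{Ind}(G)=\mathrm{Ind}(H)\ast\mathrm{Ind}(K)$, where the two factors have disjoint vertex sets.

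Next I would simply apply Proposition \ref{PROP2} to $\Delta_1=\mathrm{Ind}(H)$ and $\Delta_2=\mathrm{Ind}(K)$: the complex $\Delta=\Delta_1\ast\Delta_2=\mathrm{Ind}(G)$ is $S_2$ if and only if both $\Delta_1$ and $\Delta_2$ are $S_2$. Translating back through the definition of an $S_n$ graph (a graph is $S_2$ exactly when its independence complex is $S_2$), this says precisely that $G$ is $S_2$ if and only if $H$ and $K$ are $S_2$, which is the claim.

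There is essentially no obstacle here; the only point requiring (minimal) care is the verification that $\mathrm{Ind}(G)$ really is the join $\mathrm{Ind}(H)\ast\mathrm{Ind}(K)$, and in particular that the vertex sets are disjoint so that Proposition \ref{PROP2} applies verbatim. Once that identification is in place, the corollary is immediate. One could also remark that the statement extends by induction to any finite disjoint union of graphs, though for the purposes of the applications in this section only the two-component case is needed.
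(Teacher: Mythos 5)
Your proposal is correct and is essentially identical to the paper's proof, which also just observes that $\mathrm{Ind}(G)=\mathrm{Ind}(H)\ast\mathrm{Ind}(K)$ and applies Proposition \ref{PROP2}. The extra detail you supply (that independence in $G$ is checked componentwise because there are no edges between $V(H)$ and $V(K)$) is exactly the "it is enough to notice" step the paper leaves implicit.
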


\begin{proof}  It is enough to notice that $\text{Ind}(G)=\text{Ind}(H)\ast\text{Ind}(K)$, and apply Lemma \ref{PROP2}.
\end{proof}

We also need the following result of Davis and Domke \cite{DD} to extend Proposition \ref{PROP1} to all circulant cubic graphs.

\begin{thm} \label{THM5}
Let $G=C_{2n}(a,n)$ with $1\leq a<n$, and let $t={\rm gcd}(a,2n)$.
\begin{itemize}
\item[(i)] If $\frac{2n}{t}$ is even, then $G$ is isomorphic to $t$ copies of $C_{\frac{2n}{t}}(1,\frac{n}{t})$.
\item[(ii)] If $\frac{2n}{t}$ is odd, then $G$ is isomorphic to $\frac{t}{2}$ copies of $C_{\frac{4n}{t}}(2,\frac{2n}{t})$.
\end{itemize}
\end{thm}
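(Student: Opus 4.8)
The plan is to view $G=C_{2n}(a,n)$ as the Cayley graph of the cyclic group $\mathbb{Z}_{2n}$ with the symmetric connection set $\{a,\,2n-a,\,n\}$, and to exploit two standard facts about Cayley graphs of abelian groups: the connected components of $G$ are exactly the cosets of the subgroup $H:=\langle a,n\rangle\leq\mathbb{Z}_{2n}$ generated by the connection set, and translation by any group element is a graph automorphism permuting these cosets transitively. Hence $G$ is a disjoint union of $[\mathbb{Z}_{2n}:H]$ mutually isomorphic copies of the subgraph $G[H]$ induced on $H$, and the statement reduces to (a) computing the index $[\mathbb{Z}_{2n}:H]$ and (b) identifying $G[H]$ up to isomorphism.

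For step (a), first record that $\langle a\rangle=t\,\mathbb{Z}_{2n}$ has order $2n/t$, and then observe, by a short comparison of the $2$-adic valuations of $a$, $n$, and $2n$, that $n\in\langle a\rangle$ if and only if $t\mid n$ if and only if $2n/t$ is even; the same bookkeeping gives $\gcd(a,n)=t$ in that case and $\gcd(a,n)=t/2$ otherwise (and shows $t$ is even when $2n/t$ is odd). In case (i) this yields $H=\langle a\rangle$, so $|H|=2n/t$ and the index is $t$. In case (ii) we have $n\notin\langle a\rangle$ but $n+n=2n\equiv 0$, so $H=\langle a\rangle\cup(n+\langle a\rangle)$ has order $4n/t$ and index $t/2$. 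In both cases $H$ is the cyclic subgroup generated by $g:=\gcd(a,n)$.

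For step (b), transport $G[H]$ to a circulant graph on $\mathbb{Z}_{|H|}$ via the group isomorphism $\mathbb{Z}_{|H|}\to H$, $k\mapsto kg$; this turns $G[H]$ into $C_{|H|}(S')$ with $S'=\{a/g,\ n/g\}$, representatives reduced into $\{1,\dots,\lfloor|H|/2\rfloor\}$. Then use that multiplication by any unit $u\in\mathbb{Z}_{|H|}^{\times}$ is a graph automorphism, so $C_{|H|}(S')\cong C_{|H|}(uS')$. In case (i), $|H|=2n/t$ and $S'=\{a/t,\ n/t\}$ with $\gcd(a/t,\,2n/t)=1$; choosing $u\equiv(a/t)^{-1}\pmod{2n/t}$ sends $a/t\mapsto 1$, and since $2n/t$ is even such a $u$ is necessarily odd, so $u\cdot(n/t)\equiv n/t$ and $S'$ goes to $\{1,\ n/t\}$, i.e.\ $G[H]\cong C_{2n/t}(1,n/t)$. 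In case (ii), $|H|=4n/t$ and $S'=\{2a',\ 2n/t\}$ with $a'=a/t$, $\gcd(a',\,2n/t)=1$; using the Chinese Remainder splitting $\mathbb{Z}_{4n/t}\cong\mathbb{Z}_{2}\times\mathbb{Z}_{2n/t}$ (legitimate precisely because $2n/t$ is odd), pick the unit $u$ that is odd and satisfies $ua'\equiv 1\pmod{2n/t}$; then $u\cdot 2a'\equiv 2$, and $u$ being odd gives $u\cdot(2n/t)\equiv 2n/t$, so $S'$ goes to $\{2,\ 2n/t\}$ and $G[H]\cong C_{4n/t}(2,2n/t)$. Combined with step (a), this gives both assertions.

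I expect the crux to be the parity bookkeeping in step (b): the argument works only because the normalizing unit $u$ can be taken odd, which is exactly what forces it to fix the ``antipodal'' generator ($n/t$, resp.\ $2n/t$) up to sign, and this has to be arranged simultaneously with the congruence that sends the other generator to $1$ (resp.\ $2$) — which is why the Chinese Remainder splitting, available only in case (ii) because $2n/t$ is odd, is the natural tool. A secondary, routine nuisance is keeping every circulant connection set in its canonical range $\{1,\dots,\lfloor N/2\rfloor\}$ through the reductions and ruling out the degenerate collapses (e.g.\ $1\neq n/t$ in case (i) and $2\neq 2n/t$ in case (ii), so that the output graphs are still cubic), all of which follow easily from the hypothesis $1\leq a<n$.
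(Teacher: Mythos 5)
Your argument is correct, and all the arithmetic checks out: $\langle a\rangle=t\mathbb{Z}_{2n}$, the equivalence $n\in\langle a\rangle\Leftrightarrow t\mid n\Leftrightarrow 2n/t$ even, the identification $H=\langle\gcd(a,n)\rangle$ with $\gcd(a,n)=t$ or $t/2$ according to the parity of $2n/t$, and the index computations giving $t$ resp.\ $t/2$ components. The multiplier step is also sound: a unit modulo an even modulus is automatically odd, hence fixes the antipodal class $N/2$, and in case (ii) the Chinese Remainder splitting $\mathbb{Z}_{4n/t}\cong\mathbb{Z}_2\times\mathbb{Z}_{2n/t}$ (valid since $2n/t$ is odd) lets you impose oddness and $ua'\equiv(a')^{-1}a'\equiv 1\pmod{2n/t}$ simultaneously; the degenerate collapses are correctly excluded since $t\leq a<n$ forces $n/t\geq 2$ and $2n/t\geq 3$. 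Note, however, that there is no in-paper proof to compare against: the paper states this as an imported result of Davis and Domke \cite{DD} and gives no argument. Your Cayley-graph derivation (components are cosets of $\langle a,n\rangle$, translations make them mutually isomorphic, and multiplication by a unit normalizes the connection set) is a clean, self-contained replacement for the citation, and the sanity checks against the instances the paper actually uses ($C_{16}(2,8)\cong 2\,C_8(1,4)$, $C_{10t}(2t,5t)\cong t\,C_{10}(2,5)$) confirm it reproduces the stated decompositions.
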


Now we bring the main result of this section.

\begin{thm} Let $G=C_{2n}(a,n)$ be a circulant cubic graph and let $t={\rm gcd}(a,2n)$. Then $G$ is $S_2$ if and only if $\frac{2n}{t}=3,4,5,8$.
\end{thm}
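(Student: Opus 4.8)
The plan is to reduce the statement about the general circulant cubic graph $C_{2n}(a,n)$ to the classification of connected circulant cubic graphs that are $S_2$, which Proposition \ref{PROP1} already provides. The key structural input is Theorem \ref{THM5} (Davis--Domke): writing $t=\mathrm{gcd}(a,2n)$, the graph $G=C_{2n}(a,n)$ is a disjoint union of isomorphic copies of a connected circulant cubic graph $H$, where $H=C_{2n/t}(1,n/t)$ if $2n/t$ is even and $H=C_{4n/t}(2,2n/t)$ if $2n/t$ is odd. By Corollary \ref{COR2}, $G$ is $S_2$ if and only if each copy is $S_2$, i.e., if and only if $H$ is $S_2$. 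So the problem becomes: for which values of $m:=2n/t$ is the connected circulant cubic graph $H$ (determined by $m$) an $S_2$ graph?

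Next I would enumerate the possible connected circulant cubic graphs arising this way. Since $H$ is a \emph{connected} circulant cubic graph, by Brown--Hoshino's classification it is well-covered only in the five listed cases, and by Proposition \ref{PROP1} it is $S_2$ exactly in the four cases $C_4(1,2)$, $C_6(2,3)$, $C_8(1,4)$, $C_{10}(2,5)$ (all connected circulant cubic graphs that are not one of the five well-covered ones fail to be well-covered, hence fail $S_2$). Now I match these against the two cases of Theorem \ref{THM5}. If $2n/t=m$ is even, then $H=C_m(1,m/2)$; this is $S_2$ iff $C_m(1,m/2)\cong C_4(1,2)$ or $C_8(1,4)$, i.e. iff $m=4$ or $m=8$ (one should note $C_6(1,3)$, the $m=6$ case, is precisely the non-$S_2$ graph of Proposition \ref{PROP1}, and for $m\geq 10$ even the graph is not well-covered). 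If $2n/t=m$ is odd, then $H=C_{2m}(2,m)$; I must check which of the $S_2$ graphs in the list have this form. Here $C_6(2,3)$ corresponds to $m=3$ and $C_{10}(2,5)$ corresponds to $m=5$; and one checks that $C_{2m}(2,m)$ for odd $m\geq 7$ is not well-covered, hence not $S_2$. Collecting the surviving values $m=4,8$ (even case) and $m=3,5$ (odd case) gives exactly $2n/t\in\{3,4,5,8\}$, as claimed.

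The main obstacle — really the only nontrivial point — is making the case analysis airtight: I need to confirm that for every value of $m=2n/t$ other than $3,4,5,8$ the relevant connected circulant cubic graph fails to be $S_2$, and not merely that it is absent from the short list. This follows because an $S_2$ graph is well-covered, so it suffices to invoke Brown--Hoshino's well-coveredness classification: the only well-covered connected circulant cubic graphs are $C_4(1,2),C_6(1,3),C_6(2,3),C_8(1,4),C_{10}(2,5)$, corresponding to $m=4,6,6,8,10$ in the even case and $m=3,5$ in the odd case; among these $m=6$ (the $C_6(1,3)$ instance) is excluded by Proposition \ref{PROP1}, and $C_{10}(2,5)$, although well-covered, arises in the \emph{odd} case as $C_{2\cdot 5}(2,5)$ giving $m=5$, not as an even-case graph. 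A small bookkeeping subtlety is that the same abstract graph can in principle appear in both cases of Theorem \ref{THM5} for different $(a,n)$; but since we only care about the value $2n/t$ and the $S_2$-ness of $G$, and both are determined by $H$ up to isomorphism, this causes no trouble. Thus the proof is essentially: apply Theorem \ref{THM5}, apply Corollary \ref{COR2}, and read off the answer from Proposition \ref{PROP1} together with Brown--Hoshino.
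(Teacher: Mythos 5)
Your proposal is correct and follows the same route as the paper: decompose $G$ into connected components via Theorem \ref{THM5}, reduce to a single component via Corollary \ref{COR2}, and read off which components are $S_2$ from Proposition \ref{PROP1}. In fact you supply more detail than the paper's two-line proof, in particular the explicit matching of the components $C_{m}(1,m/2)$ and $C_{2m}(2,m)$ against the Brown--Hoshino well-coveredness list to rule out all $m\notin\{3,4,5,8\}$.
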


\begin{proof} First assume $\frac{2n}{t}$ is even. By Theorem \ref{THM5} and Corollary \ref{COR2}, $G$ is $S_2$ if and only if $\frac{2n}{t}=4,8$.
Now assume $\frac{2n}{t}$ is odd. Again by Theorem \ref{THM5} and Corollary \ref{COR2}, $G$ is $S_2$ if and only if $\frac{4n}{t}=6,10$, or equivalently, $\frac{2n}{t}=3,5$.
\end{proof}

\begin{exam} {\rm $G=C_{16}(2,8)$ is an example of a circulant graph which is $S_2$ but not Buchsbaum. Indeed, $C_{16}(2,8)$ is isomorphic to $2$ copies of $C_{8}(1,4)$, and hence it is $S_2$, but as it is shown in \cite[Table 1]{VVW}, it is not Buchsbaum.}
\end{exam}

As the final result of this paper, we present infinite families of circulant graphs which are $S_2$ but not Buchsbaum. To do this, we need the following lemma.

\begin{lem} \label{LEM3}
{\rm (\cite[Lemma 2.5]{EVV})}  Let $G$ and $H$ be two disjoint graphs that are both Buchsbaum, but not
Cohen-Macaulay. Then $G\cup H$ is not Buchsbaum.
\end{lem}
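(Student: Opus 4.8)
The plan is to pass to independence complexes and exploit how Cohen--Macaulayness and Buchsbaumness behave under joins. Set $\Delta_1=\text{Ind}(G)$ and $\Delta_2=\text{Ind}(H)$, so that $\text{Ind}(G\cup H)=\Delta_1\ast\Delta_2=:\Delta$, exactly as in Corollary \ref{COR2}. Since $G$ is not Cohen--Macaulay, $\Delta_1$ fails to be Cohen--Macaulay over some field, and since every $0$-dimensional simplicial complex is Cohen--Macaulay (in Reisner's criterion the reduced homology groups that would matter sit in vacuous degrees), this forces $\dim\Delta_1\ge 1$. Likewise $\dim\Delta_2\ge 1$, and $\Delta_2$ is not Cohen--Macaulay over at least one field $k_0$.

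The argument rests on two facts. First, for $F_1\in\Delta_1$ and $F_2\in\Delta_2$ one has $\text{link}_{\Delta}(F_1\cup F_2)=\text{link}_{\Delta_1}(F_1)\ast\text{link}_{\Delta_2}(F_2)$, which is already recorded in the proof of Proposition \ref{PROP2}. Together with Reisner's criterion this gives the standard statement that a join $\Sigma_1\ast\Sigma_2$ of nonempty complexes is Cohen--Macaulay over a field $k$ if and only if both $\Sigma_1$ and $\Sigma_2$ are Cohen--Macaulay over $k$; I only need the implication that a Cohen--Macaulay join forces each factor to be Cohen--Macaulay, and this falls out by applying Reisner's condition for $\Sigma_1\ast\Sigma_2$ to the faces $G_1\cup F_2$ with $G_1$ a facet of $\Sigma_1$ and $F_2\in\Sigma_2$: there $\text{link}_{\Sigma_1\ast\Sigma_2}(G_1\cup F_2)=\{\emptyset\}\ast\text{link}_{\Sigma_2}(F_2)=\text{link}_{\Sigma_2}(F_2)$, so Reisner's condition for the join at these faces is precisely Reisner's condition for $\Sigma_2$. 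Second, I would invoke Schenzel's criterion: a simplicial complex is Buchsbaum over $k$ only if the link of each of its vertices is Cohen--Macaulay over $k$.

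Now the steps. Choose a vertex $v$ of $\Delta_1$, which exists since $\dim\Delta_1\ge 1$. By the link formula, $\text{link}_{\Delta}(v)=\text{link}_{\Delta_1}(v)\ast\text{link}_{\Delta_2}(\emptyset)=\text{link}_{\Delta_1}(v)\ast\Delta_2$. By the join fact, this complex is not Cohen--Macaulay over $k_0$, because $\Delta_2$ is not. Hence, by Schenzel's criterion, $\Delta$ is not Buchsbaum over $k_0$, so $\text{Ind}(G\cup H)=\Delta$ is not Buchsbaum in the sense used throughout the paper (Buchsbaum over every field); that is, $G\cup H$ is not Buchsbaum.

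I do not anticipate a serious obstacle --- all the weight is carried by the two quoted facts. The one place that needs care is the bookkeeping of the base field: ``not Cohen--Macaulay'' only claims failure over \emph{some} field $k_0$, whereas ``Buchsbaum'' requires success over \emph{every} field, so one must be sure the field witnessing the failure of Cohen--Macaulayness of $\Delta_2$ is the very field in which $\Delta$ is shown not to be Buchsbaum --- which the argument above secures automatically, since the offending vertex link of $\Delta$ has $\Delta_2$ itself as a join factor. Should one wish to sidestep Schenzel's criterion, the alternative is to argue directly with $k[\Delta]\cong k[\Delta_1]\otimes_k k[\Delta_2]$ and the behaviour of depth and local cohomology under tensor products over $k$, but that is a longer route to the same conclusion.
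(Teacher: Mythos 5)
The paper gives no proof of this lemma---it is imported verbatim from \cite[Lemma 2.5]{EVV}---and your argument is correct and is essentially the standard one used there: pass to $\text{Ind}(G\cup H)=\text{Ind}(G)\ast\text{Ind}(H)$, use that a Buchsbaum complex has Cohen-Macaulay vertex links, and note that the link of a vertex $v$ of $G$ equals $\text{link}_{\text{Ind}(G)}(v)\ast\text{Ind}(H)$, which fails to be Cohen-Macaulay over the field witnessing the failure for $H$; your handling of the base field is the right care to take. (Incidentally, your proof never uses that $G$ is Buchsbaum or that $G$ is not Cohen-Macaulay, only that $H$ is not Cohen-Macaulay and $G$ is nonempty, so you have in fact proved a slightly stronger statement.)
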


\begin{cor} For $t>1$, the followings are infinite families of circulant graphs which are $S_2$ but not Buchsbaum:
\begin{itemize}
\item[(i)] $C_{8t}(t,4t)$.
\item[(ii)] $C_{10t}(2t,5t)$ and $C_{10t}(4t,5t)$.
\end{itemize}
\end{cor}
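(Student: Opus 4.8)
The plan is to reduce each family to a disjoint union of smaller circulant cubic graphs via Davis--Domke (Theorem \ref{THM5}), then apply the $S_2$-join calculus of Proposition \ref{PROP2}/Corollary \ref{COR2} for the positive direction and Lemma \ref{LEM3} for the negative direction. Concretely, for (i) take $G=C_{8t}(t,4t)$ with $t>1$. Here $n=4t$, $a=t$, $2n=8t$, so $\gcd(a,2n)=\gcd(t,8t)=t$ and $\frac{2n}{t}=8$, which is even; Theorem \ref{THM5}(i) then says $G$ is isomorphic to $t$ disjoint copies of $C_8(1,4)$. For (ii), with $G=C_{10t}(2t,5t)$ we have $a=2t$, $2n=10t$, $\gcd(2t,10t)=2t$, and $\frac{2n}{t'}$ with $t'=2t$ equals $5$, which is odd; Theorem \ref{THM5}(ii) then gives $G\cong \frac{t'}{2}=t$ disjoint copies of $C_{\frac{4n}{t'}}(2,\frac{2n}{t'})=C_{10}(2,5)$. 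Similarly $C_{10t}(4t,5t)$ has $a=4t$, $\gcd(4t,10t)=2t$ (since $\gcd(4,10)=2$), so again $\frac{2n}{2t}=5$ is odd and $G\cong t$ copies of $C_{10}(2,5)$. (One should double-check the parity and gcd bookkeeping carefully in each case; this is the one place where an off-by-a-factor-of-two slip could occur.)

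Next I would invoke Proposition \ref{PROP1} (or directly the computations in its proof): $C_8(1,4)$ and $C_{10}(2,5)$ are both $S_2$ graphs. Since $G$ is a disjoint union of copies of an $S_2$ graph, iterating Corollary \ref{COR2} shows $G$ itself is $S_2$. That settles the ``$S_2$'' half of the claim for all three families and all $t>1$.

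For the ``not Buchsbaum'' half, I would observe that $C_8(1,4)$ and $C_{10}(2,5)$ are Buchsbaum (they appear in \cite[Table 1]{EVV} / are connected circulant cubic graphs, all of which are Buchsbaum) but not Cohen-Macaulay (by \cite[Theorem 5.2]{VVW}, the only Cohen-Macaulay connected circulant cubic graphs are $C_4(1,2)$ and $C_6(2,3)$, so $C_8(1,4)$ and $C_{10}(2,5)$ are excluded). Now apply Lemma \ref{LEM3}: since $t>1$, $G$ contains at least two disjoint copies of a graph that is Buchsbaum but not Cohen-Macaulay, and the disjoint union of two such graphs is not Buchsbaum. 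Hence $G$ is not Buchsbaum. (To be fully rigorous one writes $G = G_0 \cup G_1$ where $G_0$ is one copy and $G_1$ is the union of the remaining $t-1$ copies; $G_0$ is Buchsbaum-not-CM, $G_1$ is a disjoint union of Buchsbaum-not-CM graphs and hence — again by Lemma \ref{LEM3} applied inductively, or simply because it is disconnected with no CM component — is itself Buchsbaum-not-CM, so Lemma \ref{LEM3} applies to $G_0\cup G_1$.)

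The only genuine obstacle I anticipate is the arithmetic of Theorem \ref{THM5}: getting the gcd right for $C_{10t}(4t,5t)$ (noting $\gcd(4t,10t)=2t$, not $4t$) and confirming the resulting quotient $\frac{2n}{t}$ lands on an odd value so that case (ii) of Davis--Domke applies and produces copies of $C_{10}(2,5)$ rather than something else. Everything after that is a clean two-line application of the join/disjoint-union lemmas already proved. I would present the proof as: (1) identify the decomposition into copies of $C_8(1,4)$ or $C_{10}(2,5)$; (2) cite $S_2$-ness of these base graphs and Corollary \ref{COR2} for $S_2$ of $G$; (3) cite Buchsbaum-not-CM of the base graphs and Lemma \ref{LEM3} for non-Buchsbaumness of $G$.
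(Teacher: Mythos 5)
Your proposal follows the paper's proof essentially verbatim: decompose each graph via Theorem \ref{THM5} into $t$ copies of $C_8(1,4)$ or $C_{10}(2,5)$, get $S_2$ from Proposition \ref{PROP1} plus Corollary \ref{COR2}, and get non-Buchsbaumness from the fact that these base graphs are Buchsbaum but not Cohen--Macaulay together with Lemma \ref{LEM3}; your gcd bookkeeping (in particular $\gcd(4t,10t)=2t$) is correct. One caveat: your parenthetical fix for $t>2$ is internally inconsistent. If $G_1$ is a disjoint union of two or more Buchsbaum-not-CM graphs, then Lemma \ref{LEM3} says $G_1$ is \emph{not} Buchsbaum, so it cannot serve as a ``Buchsbaum-not-CM'' input to a further application of the lemma to $G_0\cup G_1$, whose hypothesis requires both pieces to be Buchsbaum. (The paper silently applies the two-graph lemma to $t$ copies and has the same gap.) The clean repair is to use the fact underlying Lemma \ref{LEM3}: in a Buchsbaum complex the link of every nonempty face is Cohen--Macaulay, and taking $F$ to be a facet of $\mathrm{Ind}(G_2\cup\cdots\cup G_t)$ gives $\mathrm{link}_{\mathrm{Ind}(G)}(F)=\mathrm{Ind}(G_1)$, which is not Cohen--Macaulay; hence $G$ is not Buchsbaum for any $t\geq 2$.
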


\begin{proof} (i) By Theorem \ref{THM5}.(i), $C_{8t}(t,4t)$ is isomorphic to $t>1$ copies of $C_{8}(1,4)$, and hence it is $S_2$. On the other hand, $C_{8}(1,4)$ is Buchsbaum but not Cohen-Macaulay. Thus Lemma \ref{LEM3} implies that $C_{8t}(t,4t)$ is not Buchsbaum.\\
(ii) By Theorem \ref{THM5}.(ii), $C_{10t}(2t,5t)$ and $C_{10t}(4t,5t)$ are isomorphic to $t>1$ copies of $C_{10}(2,5)$, and hence they are $S_2$. Again, $C_{10}(2,5)$ is Buchsbaum but not Cohen-Macaulay. Now Lemma \ref{LEM3} yields that $C_{8t}(t,4t)$ is not Buchsbaum.
\end{proof}

\begin{center}

%{\bf ACKNOWLEDGMENT}

\end{center}
\noindent

\providecommand{\bysame}{\leavevmode\hbox
to3em{\hrulefill}\thinspace}

\bigskip
\bigskip

\end{document}